\documentclass[12pt,a4paper]{article}
\usepackage[scale=0.85]{geometry}
\usepackage[T1]{fontenc}
\usepackage{lmodern}
\usepackage{amsmath,amssymb,amsthm}
\usepackage{microtype}
\usepackage{enumitem}
\usepackage{graphicx}
\usepackage{booktabs}
\usepackage{needspace}
\usepackage[colorlinks]{hyperref}
\hypersetup{pdftitle={Cubic vertices in minimal braces}}
\allowdisplaybreaks[1]

\newtheorem{theorem}{Theorem}[section]
\newtheorem{lemma}[theorem]{Lemma}
\newtheorem{proposition}[theorem]{Proposition}
\newtheorem{conjecture}[theorem]{Conjecture}
\newtheorem{question}[theorem]{Question}

\title{Cubic vertices in minimal braces}
\author{{\small\bf Yipei Zhang$^{1}$, Xiumei Wang$^{2}$}\thanks{Corresponding author. Email address: wangxiumei@zzu.edu.cn}\\
{\small $^{1}$School of Mathematics and Statistics, North China University of Water Resources and Electric Power,}\\
{\small Zhengzhou, Henan 450046,  China}\\
{\small $^{2}$School of Mathematics and Statistics, Zhengzhou University,}\\
{\small Zhengzhou, Henan 450001,  China}}
\date{}

\makeatother
\begin{document}
\maketitle
\begin{abstract}
Braces play a fundamental role in matching theory, as they, together with bricks, constitute the basic building blocks of matching covered graphs in the tight cut decomposition.
A brace is minimal if deleting any edge from it results in a graph that is not a brace.
We prove that every perfect matching of a minimal brace of order at
least six contains an edge whose ends both have degree three.
Consequently, every such brace has at least three edges with this
property. Combining this result with the forest structure induced
by the noncubic vertices, we show that every minimal brace $G$
of order $n\ge6$ and size $m$, other than $K_{3,3}$, satisfies
$n_3(G)\ge\max\left\{8,\left\lceil\frac{2n+8}{5}\right\rceil,
\left\lceil\frac{m-n+4}{2}\right\rceil\right\}$, where $n_3(G)$ is the number of cubic vertices of $G$.
Finally, we characterize the graphs attaining the constant lower
bound: a minimal brace has exactly eight cubic vertices if and only
if it is isomorphic to $B_8$, $B_{10}$, $Q_{10}^{+}$, or $Q_{12}$.
\end{abstract}

\textbf{Keywords:} minimal brace; cubic vertex; perfect matching

\section{Introduction}

All graphs considered in this paper are finite and loopless,  and parallel edges are allowed unless
the graph is specified to be simple.
For any undefined notation and terminology, we follow \cite{BM08}.
Let $G$ be a graph with the vertex set $V(G)$ and the edge set $E(G)$.
The \emph{order} and \emph{size} of $G$ are defined as  $|V(G)|$ and $|E(G)|$, respectively.
A \emph{matching} of $G$ is a set of pairwise nonadjacent edges of $G$; it is \emph{perfect} if it covers every vertex of $G$.
A connected nontrivial graph $G$ is \emph{matching covered} if  every edge of $G$ belongs to a perfect matching of $G$.

For a nonempty proper subset $X$ of $V(G)$, we use $\partial_G(X)$ to denote the set of all the edges of $G$ with one end in $X$ and the other end in $V(G)\setminus X$, and write $\partial_G(v)$ for $\partial_G(\{v\})$. We shall omit the subscript $G$ when the graph is understood.
We refer to $\partial(X)$ as a {\it cut} of $G$.
A cut $\partial(X)$ of $G$ is {\it trivial} if one of $X$ and $V(G)\setminus X$ has exactly one vertex, and is \emph{nontrivial} otherwise.
A cut $\partial(X)$ of $G$ is {\it tight} if each perfect matching of $G$ contains precisely one edge in $\partial(X)$.
A matching covered graph with no nontrivial tight cut is called a \emph{brace} if it is bipartite, and a \emph{brick} otherwise.
Braces play a fundamental role in matching theory. Together with bricks, they form the basic building blocks of matching covered graphs under tight cut decomposition.

A brace $G$ is \emph{minimal} if $G-e$ is not a brace for every edge $e\in E(G)$. Note that every minimal brace is simple. Minimal braces form a natural extremal subclass of braces, and their edge-minimality imposes strong structural restrictions. A vertex is \emph{cubic} if it has degree three, and \emph{noncubic} otherwise. Since every brace of order at least $6$ has minimum degree at least three, cubic vertices are precisely the vertices of minimum degree. It is therefore natural to ask how many cubic vertices a minimal brace must contain.

An edge whose two ends are cubic is called a \emph{$3$-line}. He et al. \cite{HLZ2026+} proposed the following conjecture.

\begin{conjecture}[\cite{HLZ2026+}]\label{conj:HLZ2026+}
Every minimal brace with at least six vertices has a $3$-line.
\end{conjecture}

In this paper, we prove a strengthening of Conjecture~\ref{conj:HLZ2026+}.

\begin{theorem}\label{thm:transversal}
Let $G$ be a minimal brace of order at least $6$. Then every perfect matching of $G$ contains a $3$-line.
Consequently, $G$ has at least tree $3$-lines.
\end{theorem}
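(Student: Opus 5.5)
We need to show that every perfect matching $M$ of a minimal brace $G$ with $n\ge 6$ contains an edge whose ends both have degree three. The plan is to combine the standard characterization of braces with a known structural fact about minimal braces, and then count.

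\textbf{Tools.} A bipartite graph $G$ with bipartition $(A,B)$, $|A|=|B|\ge 3$, is a brace if and only if it is $2$-extendable. Equivalently, for every $X\subseteq A$ with $1\le |X|\le |A|-2$ we have $|N(X)|\ge |X|+2$. An edge $e$ is \emph{removable in the brace sense} if $G-e$ is still a brace. Minimality says no edge is removable. The key structural input I would invoke is the known fact, due to Lov\'asz and Plummer and to Zhang, Wang and Yuan, that in a minimal brace every edge has at least one cubic end. In other words, the noncubic vertices form an independent set. This is presumably the source of the ``forest structure induced by the noncubic vertices'' mentioned in the abstract. If I could not cite it, I would prove it directly. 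Take an edge $uv$ with $d(u),d(v)\ge 4$. Since $G-uv$ is not a brace, there is a tight set $X\subseteq A$ in $G-uv$, meaning $|N_{G-uv}(X)|=|X|+1$ with $2\le |X|\le |A|-2$, and $uv$ is the only $G$-edge escaping it. Then I would uncross two such tight sets, coming from the two ends, to contradict the degree assumption.

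\textbf{Main step.} Fix a perfect matching $M$ and suppose, for contradiction, that no edge of $M$ is a $3$-line. Then every edge of $M$ has exactly one noncubic end. So $M$ pairs each cubic vertex with a noncubic vertex, and there are exactly $n/2$ noncubic vertices. Counting edges incident to noncubic vertices, using independence of the noncubic set, gives:
\begin{itemize}
\item these edges all go to cubic vertices;
\item their number is $\sum_{v \text{ noncubic}} d(v)\ge 4\cdot n/2=2n$;
\item the cubic side can absorb at most $3\cdot n/2$ such edge-ends.
\end{itemize}
Since $2n>3n/2$, this is a contradiction. So the statement follows from the independence fact and pure counting. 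Note that the counting also requires each cubic vertex to be adjacent only to noncubic vertices, which is used only through the bound $3n/2$, so it is fine.

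\textbf{Consequence.} For ``at least three $3$-lines'': $G$ is $2$-extendable, so any two disjoint edges extend to a perfect matching. Suppose the $3$-lines were covered by at most two edges $e_1,e_2$. If the $3$-lines are pairwise intersecting, they form a star or a triangle; a triangle is impossible in a bipartite graph. In the star case, with a single center $c$, pick an edge $f$ disjoint from $c$ and from the $3$-line ends. Then extend $f$ together with a non-$3$-line edge at $c$ to a perfect matching avoiding all $3$-lines. The degree of $c$ is at least $3$, and we need a non-$3$-line edge at $c$; this should follow from the edge counting above. If there are exactly two disjoint $3$-lines $e_1,e_2$, pick edges $f_1$ adjacent to $e_1$ and $f_2$ adjacent to $e_2$, with $f_1,f_2$ disjoint and neither of them a $3$-line. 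Extending $\{f_1,f_2\}$ to a perfect matching avoids both $e_1$ and $e_2$, a contradiction.

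\textbf{Main obstacle.} The hardest part is justifying the independence of the noncubic vertices. I would try to cite it first. The small case analysis for the ``three $3$-lines'' count, choosing suitable disjoint edges in a graph of order at least $6$, is routine but fiddly.
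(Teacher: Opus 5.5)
\paragraph{The key lemma is false.} Your argument rests on the claim that in a minimal brace every edge has a cubic end, i.e.\ that $V_{\ge4}$ is a stable set. This is not true. The known result (Lou, Lemma~\ref{lem:forest}) is only that $G[V_{\ge4}]$ is a forest, and edges between noncubic vertices do occur. Take $Q_{10}^{+}$, one of the minimal braces of order $10$:
\begin{itemize}
\item The edge $p_1q_1$ joins two vertices of degree $4$.
\item It cannot be deleted, because $Q_{10}^{+}-p_1q_1=Q_{10}$ has $N(\{p_1,p_2\})=\{y_1,y_2,y_3\}$, violating $|N(X)|\ge|X|+2$.
\item Every other edge has a cubic end, so it cannot be deleted either, since braces of order at least six have minimum degree at least three.
\end{itemize}
So no uncrossing argument can establish your lemma. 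The step ``every $M$-edge has exactly one noncubic end'' and the count built on it fall with it.

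\paragraph{The forest property does not rescue the count.} Suppose $M$ contains no $3$-line and $s$ of its edges join two noncubic vertices. Then $n_3(G)=n/2-s$. Double counting the edges between $V_3$ and $V_{\ge4}$, with the forest bound on $G[V_{\ge4}]$, gives exactly
\[
n/2=5s+2t+2c+\rho,
\]
where $t$ is the number of $3$-lines, $c$ the number of components of $G[V_{\ge4}]$, and $\rho$ the degree excess above four. This is consistent for large $n$, so there is no contradiction. The underlying problem is that degree counting cannot see which edges lie in $M$.

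\paragraph{The missing idea.} The paper encodes the pair $(G,M)$ as the digraph $D(G,M)$. Its vertices are the edges $a_ib_i$ of $M$, and it has an arc $ij$ for each non-$M$ edge $a_ib_j$. Minimal $2$-extendability of $G$ makes $D(G,M)$ minimal $2$-strong. Mader's theorem then gives a vertex $i$ of $D(G,M)$ with indegree and outdegree both equal to $2$. This means $d(a_i)=d(b_i)=3$, so $a_ib_i\in M$ is a $3$-line.

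\paragraph{The three $3$-lines part.} Your derivation of three $3$-lines from the transversal property is fine. It uses the same tools as the paper, matching-coveredness and $2$-extendability. In the star case you can simplify: extend any non-$3$-line edge at the centre to a perfect matching.
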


We denote by $n_3(G)$ the number of cubic vertices of a graph $G$, and by $\lceil x\rceil$ the least integer greater than or equal to $x$.
Lou~\cite{Lou} proved that every minimal brace $G$ of order $n\ge6$ has  $n_3(G)\ge \max\{6,\frac{2n+2}{5}\}$;
Mallik et al. \cite{MDK} proved that $n_3(G)\ge\frac{m-n+1}{2}$, where $m$ is the size of $G$.
Using Theorem \ref{thm:transversal}, we obtain Theorem \ref{thm:main}, which strengthens
these bounds.

\begin{figure}[htb]
\centering
\includegraphics[width=0.5\textwidth]{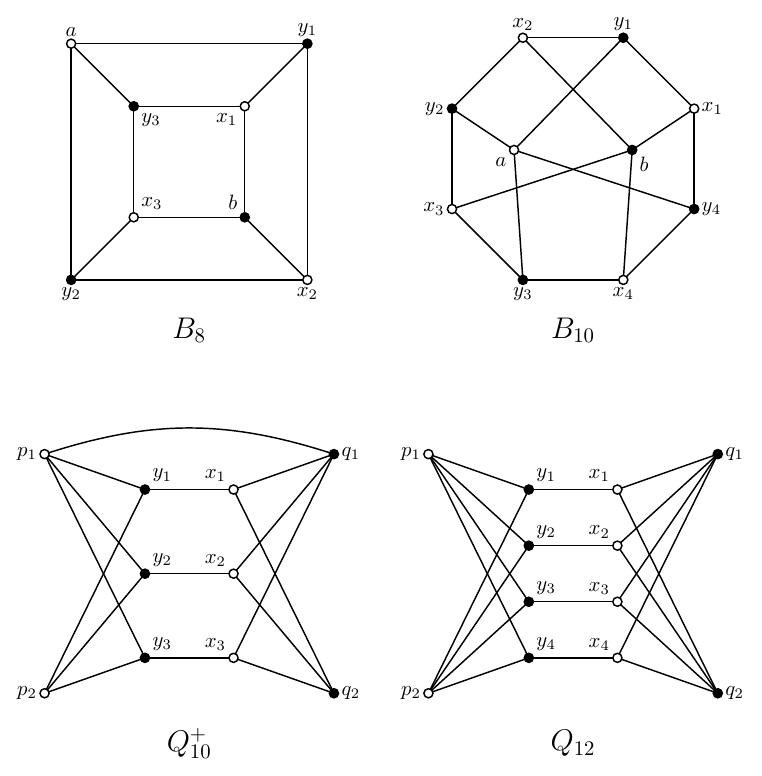}
\caption{The four minimal braces with exactly eight cubic vertices.}
\label{fig:eight-cubic}
\end{figure}

\begin{theorem}\label{thm:main}
Let $G$ be a minimal brace of order $n\ge6$ and size $m$.
Then the following statements hold.
\begin{enumerate}[label=(\roman*),nosep]
\item\label{item:count}
If $G\not\cong K_{3,3}$, then $ n_3(G)\ge
 \max\left\{
 8,\left\lceil\frac{2n+8}{5}\right\rceil,
 \left\lceil\frac{m-n+4}{2}\right\rceil
 \right\}$.
Consequently, $n_3(G)=6$ if and only if $G\cong K_{3,3}$, and no minimal brace has exactly seven cubic vertices.
\item\label{item:eight}
The graph $G$ has exactly eight cubic vertices if and only if it is
isomorphic to one of $B_8, B_{10}, Q_{10}^{+}$ and $Q_{12}$ (as shown in Figure \ref{fig:eight-cubic}).
\end{enumerate}
\end{theorem}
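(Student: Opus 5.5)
We plan to prove Theorem~\ref{thm:main} by combining Theorem~\ref{thm:transversal} with a forest structure on the noncubic vertices. The first step is to show that in a minimal brace $G\ne K_{3,3}$ of order $n\ge 6$, the subgraph $F$ induced by the noncubic vertices is a forest. We expect to do this by the standard thin-edge/removable-edge argument used by Lou and by Mallik et al. The key fact is that an edge $e$ with both ends of degree at least four whose deletion keeps $G$ a brace contradicts minimality. A cycle among noncubic vertices should then yield such a removable edge: apply the characterization of braces through $k$-extendability, namely that a bipartite graph on at least six vertices is a brace iff it is $2$-extendable, and choose $e$ on the cycle so that any obstructing tight cut of $G-e$ is ruled out. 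We will take this forest property, together with the fact that every noncubic vertex has a cubic neighbour, as essentially known from the cited works, possibly sharpened.

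\textbf{Counting.} Let $n_3=n_3(G)$ and $n_4=n-n_3$. Since $F$ is a forest, it has at most $n_4-1$ edges. Double counting degrees gives
\[2m=\sum_v d(v)=3n_3+\sum_{v\text{ noncubic}} d(v),\]
and the edges leaving the noncubic set number $\sum_{v\text{ noncubic}}d(v)-2|E(F)|\le 3n_3-2t$, where $t$ is the number of $3$-lines. Theorem~\ref{thm:transversal} gives $t\ge n/2$'s worth of information: fixing a perfect matching $M$, each $M$ contains a $3$-line, and the $3$-lines form a set meeting every perfect matching. We will use this to improve the edge count by a constant. Concretely, $m=|E(F)|+e(\text{cubic},\text{noncubic})+t\le (n_4-1)+(3n_3-2t)+t$. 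With $t\ge 3$ this yields $m\le n+2n_3-4$, which rearranges to $n_3\ge (m-n+4)/2$. For the order bound we need $m\ge$ something in terms of $n_4$, because noncubic vertices have degree at least $4$. The inequality $4n_4+3n_3\le 2m\le 2(n_4-1)+2(3n_3-t)+2t-\dots$ should give $2n_4\le 3n_3-2t-2+\ldots$. We will work this out so that $n_3\ge(2n+8)/5$, using $t\ge3$ together with the fact that $G\neq K_{3,3}$ forces at least one noncubic vertex when this bound matters. The constant $8$ then comes from excluding $n_3\in\{6,7\}$. If $n_3\le 7$, the bounds force $n\le 13$ and $m$ small, and a parity/bipartition argument finishes the case: each colour class contains at least $3$ cubic vertices, and the counts in the two classes must be balanced by the degree sums $\sum_{A}d=\sum_B d$.

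\textbf{Main obstacle.} We expect the main difficulty to be part~(ii) and the exclusion of $n_3=7$. When $n_3=8$, equality is nearly forced in every inequality: $F$ is a tree or near-tree, $t$ is exactly $3$ or close to it, and the degrees are tightly constrained. From this we will derive that $n\le 12$ and that the degree sequence is restricted. We will then do a structured case analysis on $n_4\in\{0,\dots,4\}$, using $2$-extendability, to reconstruct $G$ as $B_8$, $B_{10}$, $Q_{10}^+$ or $Q_{12}$. The converse direction is a direct verification that these four graphs are minimal braces with eight cubic vertices.
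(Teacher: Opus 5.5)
Your counting argument is sound and matches the paper's Lemma~\ref{lem:counts}. Given that $G[V_{\ge4}]$ is a forest, there are exactly $3n_3-2t$ cubic--noncubic edges and at most $n_4-1$ edges among noncubic vertices. With $t\ge3$ this gives $4n_4\le 2(n_4-1)+3n_3-2t$, i.e.\ $5n_3\ge 2n+8$, and also $m\le n+2n_3-4$.

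The first gap is the constant $8$, which does not follow from parity or from balancing degree sums across colour classes. If $n_3\le7$ you get $n\in\{6,8,10,12\}$, and configurations survive every such test. For instance, take $n=12$, $n_3=7$, with one class made of four cubic vertices and vertices of degrees $5,4$, and the other of three cubic vertices and three vertices of degree $4$, so that $m=21$, $t+c=5$, $\rho=1$. This satisfies \eqref{eq:identity-order}, \eqref{eq:identity-size}, both of your inequalities, equal class degree sums, and has at least three cubic vertices per class. Analogous configurations survive at $n=10$ with $n_3\in\{6,7\}$. The paper removes them using the classification of small minimal braces from \cite{FKC} (Lemma~\ref{lem:small}). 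The only minimal braces of orders $8$ and $10$ are $B_8$ and $M_{10},B_{10},Q_{10}^{+}$. Every minimal brace of order $12$ has at most $20$ edges, whereas $n_3\le7$ at $n=12$ forces $2m\ge 48-n_3\ge41$. You must either import these facts or supply a genuine structural argument in their place.

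The second and more serious gap is in part~(ii). Your claim that $n_3=8$ forces $n\le12$ does not follow from counting: $5n_3\ge2n+8$ gives only $n\le16$. Orders $14$ and $16$ are consistent with every identity:
\begin{itemize}
\item at $n=16$ one gets exactly $t=3$, $c=1$, $\rho=0$, $m=28$, four cubic and four degree-$4$ vertices per class, and $G[V_4]$ a tree;
\item at $n=14$ one gets $m=24$ with $t+c=6$.
\end{itemize}
Your case analysis over $n_4\in\{0,\dots,4\}$ therefore omits precisely the cases that make up the bulk of the paper.

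The paper excludes $n=16$ in three steps. It shows the three $3$-lines form $3K_2$. It then uses Lemma~\ref{lem:four-matching} to build a perfect matching with no $3$-line, contradicting Theorem~\ref{thm:transversal}, unless some degree-$4$ vertex $u$ has $N(u)=V_4\cap B$. In that remaining case a critical-set computation via Lemma~\ref{lem:critical-set} shows that $G-uv$ is still a brace.

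It excludes $n=14$ via the minimality-preserving thin-edge reduction of \cite{FKC} (Lemma~\ref{lem:induction}) to a minimal brace $J$ of order $12$ or $10$. This forces $J\cong Q_{12}$, $B_{10}$ or $Q_{10}^{+}$, or else $G$ is a stable-extension of a $19$-edge $J$. Critical-set arguments, together with Lemma~\ref{lem:fourteen-not}, then contradict minimality or the forest property.

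Your ``near-equality'' heuristic also points the wrong way. The four extremal graphs have $t=12,8,9,4$, and full equality occurs only in the order-$16$ configuration that must be ruled out. Finally, for $n\in\{8,10,12\}$ the paper identifies $B_8,B_{10},Q_{10}^{+},Q_{12}$ directly from Lemma~\ref{lem:small}, not from a new case analysis.
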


The rest of the paper is organized as follows. In Section \ref{sec:prelim}, we present some notation,  terminology and lemmas. In Sections \ref{sec:transversal} and \ref{sec:main-proof}, we give proofs of Theorems \ref{thm:transversal} and \ref{thm:main}, respectively.

\section{Preliminaries}\label{sec:prelim}

We begin with some notation and terminology.
For a bipartite graph $H$ with colour classes $A_H$ and $B_H$, we write $H=H[A_H,B_H]$, and simply use $A,B$ when there is no ambiguity.
For $X\subseteq V(H)$, let $H[X]$ denote the
subgraph induced by $X$. For disjoint subsets $X,Y\subseteq V(H)$,
let $E_H(X,Y)$ denote the set of all the edges with one end in $X$ and the
other in $Y$. Write $d_H(v)$ for the degree of $v$, and
$\delta(H)$ and $\Delta(H)$ for the minimum and maximum degrees of $H$.
Let $N_H(X)=\bigcup_{v\in X}N_H(v)$.
Subscripts are omitted only when the underlying graph is clear from context.
A bipartite graph is \emph{balanced} if its two colour classes have equal
size.

A \emph{leaf} of a tree is a vertex that has degree exactly one. A \emph{star} is a tree whose edges all meet a single vertex, its center. A \emph{double
star} is obtained by joining the centers of two vertex disjoint stars.  For a graph $H$, $rH$ denotes the disjoint union of $r$ copies of $H$. A subset of $V(H)$ is \emph{stable} if no two of its vertices are adjacent in $H$.
We write $K_2$ for a single edge. For $\ell\ge3$, define the graph $Q_{2\ell+4}$ on colour classes $\{p_1,p_2,x_1,\ldots,x_\ell\}$ and $\{q_1,q_2,y_1,\ldots,y_\ell\}$
by adding the edges $p_jy_i,x_iq_j,x_iy_i$, where $j=1,2$ and
$1\le i\le\ell$.
The graph $Q_{10}^{+}$ shown in Figure~\ref{fig:eight-cubic} is obtained from $Q_{10}$ by adding the edge $p_1q_1$.

A \emph{prism} consists of two vertex disjoint even cycles
$x_1x_2\cdots x_rx_1$ and $y_1y_2\cdots y_ry_1$, where
$r\ge4$, together with the edges $x_iy_i$, $1\le i\le r$.
A \emph{M\"obius ladder} $M_{2h}$ is the cycle $v_1v_2\cdots v_{2h}v_1$ together with the edges
$v_iv_{i+h}$, $1\le i\le h$, where $h\ge3$ is odd.
Note that all prisms and  M\"obius ladders are cubic.
For $h\ge4$, a \emph{biwheel} $B_{2h}$ is the graph obtained from a cycle of order $2h-2$, with colour classes $A,B$,
by adding two nonadjacent vertices $a,b$, joining $a$ to all vertices of
$B$, and joining $b$ to all vertices of $A$. The cycle is called the
\emph{rim}, and $a,b$  the \emph{hubs}.
The cube $B_8$, shown in Figure \ref{fig:eight-cubic}, is the smallest prism as well as the smallest biwheel.
The complete graph $K_{3,3}$ is the smallest M\"obius ladder.
Let $\mathcal G$ be the set that consists of $K_2,C_4$, all
prisms, all M\"obius ladders and all biwheels.

A connected graph $G$ is \emph{$k$-extendable} if it has a matching of size $k$ and every such matching can be extended to a perfect matching of $G$.
Note that $1$-extendable graphs are exactly the matching covered graphs. 
A $k$-extendable graph is \emph{minimal} if deleting any edge results in a graph that is not $k$-extendable.

Let $G$ be a  graph, and $u$  a vertex of degree two that has two distinct neighbors, say $v_1$ and $v_2$. The \emph{bicontraction} of $u$ is the operation of contracting the two edges $uv_1$ and $uv_2$ incident with $u$.
Note that the bicontraction of $u$ in  $G$ may not produce a simple graph.
The retract of $G$, denoted $\widehat{G}$, is the  graph obtained by bicontracting all its vertices of degree two that have two distinct neighbors.
We identify each edge in the retract with the corresponding original edge.

Let $G$ be a brace of order six or more.  An edge $e$ of $G$ is \emph{thin} if $\widehat{G-e}$ is also a brace.
Note that  the brace $\widehat{G-e}$ may not be simple.
A thin edge $e$ of a simple brace $G$ is \emph{strictly thin} if the brace $\widehat{G-e}$ is also simple.
Its \emph{index}, denoted index($e$), is the number of vertices of degree two in $G-e$.
Clearly, index($e$)$\in\{0,1,2\}$.
In $G-e$, a vertex of degree two is referred to as an \emph{inner vertex}, and each of its neighbors is referred to as an \emph{outer vertex}.
For a minimal brace $G$, a pair $(e,F)$ is \emph{minimality-preserving} if $e$
is a strictly thin edge of $G$ and $F$ is a subset of $E(\widehat{G-e})$ such that $\widehat{G-e}-F$ is a minimal brace.

A stable-extension of a bipartite graph $J[A,B]$ is the graph
obtained from $J$ by adding two new vertices $x,y$
and five new edges $xb_1,xb_2,ya_1,ya_2,xy$, where
$\{a_1,a_2,b_1,b_2\}$ is a stable set of $J$ with
$a_1,a_2\in A$ and $b_1,b_2\in B$.
Its colour classes are $A\cup\{x\}$ and $B\cup\{y\}$.
The four  vertices $a_1,a_2,b_1, b_2$ are called the \emph{attachment vertices} of $J$.

For a graph $G$, let $V_k(G)$ denote the set of all the vertices in $G$ of degree
 $k$, and let $V_{\ge k}(G)$ denote the set of all the vertices in $G$ of
degree at least $k$. We omit $G$ when there is no ambiguity.
Thus, $V_3$ is the set of all the cubic vertices of $G$, and so $n_3(G)=|V_3(G)|$.

\begin{lemma}[\cite{Hall1935}]\label{lem:hall}
A bipartite graph $H[A,B]$ has a matching which covers every vertex in  $A$ if and
only if $|N(X)|\ge|X|$ for every $X\subseteq A$.
\end{lemma}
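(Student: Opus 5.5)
This is Hall's theorem, a classical result cited from \cite{Hall1935}, so the paper will use it without proof. If a proof were wanted, I would argue as follows.

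\emph{Necessity.} Suppose $M$ is a matching covering $A$. For any $X\subseteq A$, the $M$-partners of the vertices of $X$ are $|X|$ distinct vertices, all lying in $N(X)$. Hence $|N(X)|\ge|X|$.

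\emph{Sufficiency.} I would use induction on $|A|$; the case $|A|\le 1$ is trivial.

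\emph{Case 1: strict surplus.} Suppose every nonempty proper subset $X\subsetneq A$ satisfies $|N(X)|\ge|X|+1$.
\begin{enumerate}[nosep]
\item Pick any edge $ab$ with $a\in A$; it exists because $|N(\{a\})|\ge1$.
\item Delete $a$ and $b$. In the remaining graph, every nonempty $X\subseteq A\setminus\{a\}$ loses at most one neighbour, namely $b$, so it still has at least $|X|$ neighbours.
\item By induction, the smaller graph has a matching covering $A\setminus\{a\}$. Adding $ab$ gives a matching covering $A$.
\end{enumerate}

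\emph{Case 2: a tight set.} Otherwise some nonempty proper subset $X_0\subsetneq A$ has $|N(X_0)|=|X_0|$.
\begin{enumerate}[nosep]
\item Apply induction to $H[X_0\cup N(X_0)]$. Hall's condition holds there because neighbourhoods of subsets of $X_0$ lie inside $N(X_0)$. This gives a matching of $X_0$ into $N(X_0)$.
\item Apply induction to $H' = H[(A\setminus X_0)\cup(B\setminus N(X_0))]$.
\item Check Hall's condition in $H'$. Take $Y\subseteq A\setminus X_0$. Then
\[
|N_{H'}(Y)| = |N(Y\cup X_0)| - |N(X_0)| \ge |Y|+|X_0| - |X_0| = |Y|.
\]
\item The two matchings are vertex-disjoint, so their union covers $A$.
\end{enumerate}

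The only step needing care is the inequality in Case 2: one has to see that the neighbourhood of $Y$ in $H'$ is exactly $N(Y\cup X_0)\setminus N(X_0)$.
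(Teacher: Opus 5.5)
Your proof is correct. Necessity is the usual injection argument. The two-case induction for sufficiency also checks out.
\begin{itemize}
\item It splits on whether some nonempty proper $X_0\subsetneq A$ satisfies $|N(X_0)|=|X_0|$.
\item Both $X_0$ and $A\setminus X_0$ are strictly smaller than $A$, so the induction hypothesis applies to each piece.
\item The key identity $N_{H'}(Y)=N(Y)\setminus N(X_0)=N(Y\cup X_0)\setminus N(X_0)$ holds, and together with $Y\cap X_0=\emptyset$ it gives Hall's condition in $H'$.
\end{itemize}

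The paper gives no proof of this lemma; it only cites Hall, so there is no competing argument to compare with. The paper only ever uses the sufficiency direction, in the balanced setting of Lemma~\ref{lem:four-matching}, where a matching covering $A$ is automatically perfect.
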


For a balanced bipartite graph $H[A,B]$, Lemma~\ref{lem:hall} implies
that $H$ has a perfect matching if and only if $|N(X)|\ge|X|$
for every $X\subseteq A$.

\begin{lemma}[\cite{FKC}]\label{lem:brace}
Let $H[A,B]$ be a connected bipartite graph with $|A|=|B|\ge3$. The following are equivalent:
\begin{enumerate}[label=(\roman*),nosep]
\item $H$ is a brace.
\item\label{item:brace-neighbourhood} For every nonempty subset $X$ of $A$ such that $|X|\le |A|-2$, we have
\begin{equation}\label{eq:hall}
 |N(X)|\ge |X|+2.
\end{equation}

\item For any distinct vertices $a_1,a_2\in A$ and $b_1,b_2\in B$,
the graph $H-\{a_1,a_2,b_1,b_2\}$ has a perfect matching.
\item\label{item:brace-extendable} $H$ is $2$-extendable.
\end{enumerate}
\end{lemma}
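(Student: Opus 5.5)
The statement just given is the equivalence of the four brace conditions in Lemma~\ref{lem:brace}, attributed to~\cite{FKC}. I would prove it by the cycle (i)$\Rightarrow$(ii)$\Rightarrow$(iii)$\Rightarrow$(iv)$\Rightarrow$(i), using only Hall's theorem (Lemma~\ref{lem:hall}) and the definition of tight cuts. Write $n=|A|=|B|\ge3$.

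For (ii)$\Rightarrow$(iii), fix distinct $a_1,a_2\in A$ and $b_1,b_2\in B$, and set $H'=H-\{a_1,a_2,b_1,b_2\}$, which is balanced. Let $X\subseteq A\setminus\{a_1,a_2\}$ be nonempty. Then $|X|\le n-2$, so (ii) gives $|N_H(X)|\ge|X|+2$. Deleting $b_1,b_2$ loses at most two neighbours, so $|N_{H'}(X)|\ge|X|$, and Hall's theorem gives the perfect matching.

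For (iii)$\Rightarrow$(iv), there are two things to check. First, $H$ has a perfect matching: pick any edge $a_1b_1$ and another disjoint edge $a_2b_2$, or argue directly from (iii) with arbitrary vertices together with connectivity. Second, take a matching $\{a_1b_1,a_2b_2\}$ of size two; the perfect matching of $H-\{a_1,a_2,b_1,b_2\}$ from (iii) extends it. Connectivity is given, and every edge lies in some $2$-matching, since $n\ge3$ and Hall's condition allows a disjoint second edge. Hence $H$ is also $1$-extendable, that is, matching covered.

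For (iv)$\Rightarrow$(ii), argue by contraposition. Suppose some $X\subseteq A$ with $1\le|X|\le n-2$ has $|N(X)|\le|X|+1$. Since $H$ is matching covered, $|N(X)|\ge|X|+1$, and connectivity rules out equality $|N(X)|=|X|$. So $|N(X)|=|X|+1$. As $|X|\le n-2$, there are two vertices $a_1,a_2\in A\setminus X$. I need two disjoint edges $a_1b_1,a_2b_2$ with $b_1,b_2\notin N(X)$ to exist; otherwise, I pick the edges so that at least one of $b_1,b_2$ lies in $N(X)$. The cleanest choice is a single edge $a_1b_1$ with $b_1\in N(X)$, which exists by connectivity (some $a\notin X$ is adjacent to $N(X)$, else $X\cup N(X)$ is a component); this edge extends to a perfect matching $M$. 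Then $M$ must match $X$ into $N(X)\setminus\{b_1\}$, which has exactly $|X|$ vertices. Consequently the vertex $a_2$, matched outside $N(X)$, together with $a_1b_1$, gives a $2$-matching $\{a_1b_1,a_2b_2\}$ with $b_2\notin N(X)$. Removing it leaves $X$ with only $|X|$ available neighbours, namely $N(X)\setminus\{b_1\}$, which is fine. The obstruction has to come instead from choosing \emph{two} edges into $N(X)$ from outside $X$. If two such disjoint edges exist, then $X$ is left with $|X|-1$ neighbours, a contradiction. If not, all edges from $A\setminus X$ to $N(X)$ meet a single vertex. This case analysis is the step I expect to be the main obstacle, and I would handle the degenerate cases using a vertex cover of size one, which contradicts matching-coveredness or connectivity, together with $|A\setminus X|\ge2$.

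Finally, I would show (ii)$\Leftrightarrow$(i) via tight cuts. Any nontrivial tight cut in a bipartite matching covered graph has a shore $S$ with $|S\cap A|=|S\cap B|+1$ and all cut edges incident with $S\cap B$'s complement side, that is, $N(S\cap A)\subseteq S\cap B$. Setting $X=S\cap A$ gives $|N(X)|\le|X|-1+\ldots$; more precisely it gives the standard barrier $|N(X)|=|X|+1$ with $2\le$ suitable sizes, contradicting (ii). Conversely, a violating $X$ with $|N(X)|=|X|+1$ yields the cut $\partial(X\cup N(X))$, which is nontrivial since $1\le|X|\le n-2$ and which every perfect matching crosses exactly once by counting. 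Hence it is a nontrivial tight cut, contradicting (i).
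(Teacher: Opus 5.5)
The paper gives no proof of this lemma (it is quoted from the literature), so your outline has to stand on its own. Its architecture is the standard one. The links (ii)$\Rightarrow$(iii), (iii)$\Rightarrow$(iv) (including matching-coveredness) and (i)$\Rightarrow$(ii) via the cut $\partial(X\cup N(X))$ are fine. Two steps, however, do not work as written.

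\textbf{The inclusion in (ii)$\Rightarrow$(i) is backwards.} If $|S\cap A|=|S\cap B|+1$, counting shows that every perfect matching has its unique cut edge between $S\cap A$ and $\overline{S}\cap B$, and none between $S\cap B$ and $\overline{S}\cap A$. Matching-coveredness then rules out all edges of the latter type, i.e.\ $N(S\cap B)\subseteq S\cap A$. Your inclusion $N(S\cap A)\subseteq S\cap B$ would give $|N(S\cap A)|<|S\cap A|$, which Hall forbids; this is why you were left with ``$|X|-1+\ldots$''. Take instead $X=\overline{S}\cap A$. Then $N(X)\subseteq\overline{S}\cap B$, a set of $|X|+1$ vertices. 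Since both shores are odd with at least $3$ vertices, you get $|X|\ge1$ and $|S\cap A|\ge2$, i.e.\ $|X|\le n-2$.

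\textbf{The degenerate case of (iv)$\Rightarrow$(ii) is only announced.} This is the one place where the lemma has real content. By contrast, (iii)$\Rightarrow$(ii) is immediate from Hall, because (iii) lets you delete arbitrary vertices, whereas (iv) only lets you delete ends of edges. Your vertex-cover idea does work, but it must be carried out case by case; also drop the false start about $b_1,b_2\notin N(X)$. Let $K$ be the subgraph of edges between $A\setminus X$ and $N(X)$.

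If $K$ has two disjoint edges, they form a $2$-matching that does not extend. If $K$ is empty, $X\cup N(X)$ is a union of components, contradicting connectivity.

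If every edge of $K$ meets some $w\in N(X)$, then $N(X)\setminus\{w\}$ has $|X|$ vertices, all of whose neighbours lie in $X$. So every perfect matching matches it onto $X$, and no edge from $w$ to $X$ lies in a perfect matching.

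If every edge of $K$ meets some $w\in A\setminus X$, put $A'=A\setminus(X\cup\{w\})$. It is nonempty, and this is exactly where $|A\setminus X|\ge2$ is needed. We have $N(A')\subseteq B\setminus N(X)$, a set of size $|A'|$, so every perfect matching matches $w$ into $N(X)$. Hence either an edge from $w$ to $B\setminus N(X)$ lies in no perfect matching, or $A'\cup(B\setminus N(X))$ is a union of components.

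With these two repairs the proof is complete.
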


\begin{lemma}[\cite{FKC}]\label{lem:brace-3-connected}
Every brace of order at least six is $3$-connected.
\end{lemma}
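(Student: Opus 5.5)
The plan is to argue by contradiction from the neighbourhood condition of Lemma~\ref{lem:brace}\ref{item:brace-neighbourhood}. I will use it in both colour classes; this is legitimate because condition (iii) of Lemma~\ref{lem:brace} is symmetric in $A$ and $B$. Let $H[A,B]$ be a brace with $|A|=|B|\ge3$, and suppose $H$ has a vertex cut $S$ with $|S|\le2$. Since $H$ is connected, $|S|\ge1$.

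\textbf{Reducing to $|S|=2$.} If $S=\{v\}$, let $D$ be a largest component of $H-v$. If $|D|=1$, then every component of $H-v$ is a single vertex, so $H-v$ is edgeless and $H$ is a star, which is not a brace of order at least six. Hence $|D|\ge2$. Pick any $w\in V(D)$; then $D-w$ is nonempty and disconnected from the other components, so $\{v,w\}$ is still a vertex cut. So we may assume $|S|=2$. Let $C_1,\dots,C_k$ ($k\ge2$) be the components of $H-S$, and write $a_i=|A\cap V(C_i)|$ and $b_i=|B\cap V(C_i)|$.

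\textbf{Case $S=\{u,v\}\subseteq A$ (the case $S\subseteq B$ is symmetric).} First I show every $a_i\ge1$. If $a_i=0$, then $C_i$ is a single vertex $y\in B$ with $N(y)\subseteq S$. Taking $X=\{y\}$ in the $B$-version of \eqref{eq:hall}, which applies since $1\le|B|-2$, gives $|N(y)|\ge3$, a contradiction. Now apply \eqref{eq:hall} to $X=A\cap V(C_i)$. This is allowed because $a_i\le |A|-2$, and it gives $N(X)\subseteq B\cap V(C_i)$, so $b_i\ge a_i+2$. Summing over $i$,
\[|B|=\sum_i b_i\ \ge\ \sum_i a_i+2k\ =\ |A|-2+2k\ \ge\ |A|+2,\]
which contradicts $|A|=|B|$.

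\textbf{Case $S=\{a,b\}$ with $a\in A$ and $b\in B$.} Here $\sum_i a_i=\sum_i b_i=|A|-1$. If some component had $a_i=0$, it would be a single vertex of $B$ whose only possible neighbour is $a$, so its degree would be at most one. This contradicts \eqref{eq:hall} for a singleton. Hence all $a_i\ge1$, and likewise all $b_i\ge1$. Since $k\ge2$, each $a_i\le|A|-2$. Applying \eqref{eq:hall} to $X=A\cap V(C_i)$, whose neighbourhood lies in $(B\cap V(C_i))\cup\{b\}$, gives $b_i+1\ge a_i+2$, that is, $b_i\ge a_i+1$. Summing, $|A|-1\ge |A|-1+k$, which is impossible.

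In every case we reach a contradiction, so $H$ is $3$-connected.

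The only delicate point is that \eqref{eq:hall} needs the subset to be nonempty and of size at most $|A|-2$ (resp.\ $|B|-2$), together with the singleton degree bounds. This is why degenerate components with an empty side are excluded before summing.
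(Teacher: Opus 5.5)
Your proof is correct. The paper does not prove this lemma at all; it imports it from \cite{FKC}, where it is a standard fact. For instance, it follows from Plummer's theorem that a $k$-extendable graph on at least $2k+2$ vertices is $(k+1)$-connected, applied with $k=2$.

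You instead derive it directly from Lemma~\ref{lem:brace}\ref{item:brace-neighbourhood}, applied in both colour classes. You first reduce a cut vertex to a $2$-cut. You then exclude components of $H-S$ that are empty on one side, using the singleton bound $|N(y)|\ge3$. Finally you sum $b_i\ge a_i+2$ (when $S$ lies in one class) or $b_i\ge a_i+1$ (when $S$ meets both classes) over the $k\ge2$ components, which contradicts $|A|=|B|$.

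The side conditions all hold. The bound $a_i\le|A|-2$ follows because every other component has a nonempty $A$-side. The containments $N(X)\subseteq B\cap V(C_i)$ and $N(X)\subseteq (B\cap V(C_i))\cup\{b\}$ are right in the respective cases.

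Your version makes the preliminaries self-contained relative to Lemma~\ref{lem:brace}, which is itself cited. The citation route buys brevity.

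One small wording point: the paper allows parallel edges, so in the mixed case write $|N(y)|\le1$ rather than ``degree at most one''. The contradiction you actually use is with $|N(\{y\})|\ge3$, so nothing else changes.
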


By Lemma~\ref{lem:brace}\ref{item:brace-extendable}, a bipartite graph of order at least six
is a minimal brace if and only if it is minimal $2$-extendable.
For a brace $G$ of order at least six, Lemma \ref{lem:brace-3-connected} implies that $\delta(G)\ge 3$.
Thus, $V_{\geq4}(G)$ is the set of noncubic vertices in $G$.

\begin{lemma}[\cite{Lou}]\label{lem:forest}
In a minimal brace of order at least six, the subgraph induced by
the noncubic vertices is a forest.
\end{lemma}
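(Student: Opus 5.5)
The plan is to argue by contradiction through the neighbourhood characterisation in Lemma~\ref{lem:brace}\ref{item:brace-neighbourhood}, combined with an uncrossing argument. Let $G=G[A,B]$ be a minimal brace of order at least six, and put $f(X)=|N(X)|-|X|$ for $X\subseteq A$ (and symmetrically for subsets of $B$). Recall that $|N(X\cup Y)|+|N(X\cap Y)|\le |N(X)|+|N(Y)|$. Consequently $f(X\cup Y)+f(X\cap Y)\le f(X)+f(Y)$, and the inequality is strict as soon as some vertex lies in $N(X)\cap N(Y)$ but not in $N(X\cap Y)$.

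Suppose $G[V_{\ge4}]$ contains a cycle $C$. Let $e=ab$ be an edge of $C$, with $a\in A$ and $b\in B$. Both ends have degree at least $4$, so $G-e$ is still connected and balanced with minimum degree at least three. By minimality $G-e$ is not a brace, so Lemma~\ref{lem:brace} yields a nonempty $X\subseteq A$ with $|X|\le |A|-2$ and $|N_{G-e}(X)|\le |X|+1$. Deleting one edge lowers $|N(X)|$ by at most one, so $f(X)=2$ in $G$, $a\in X$, and $b$ is the unique neighbour of $a$ in $N(X)$ from $X$; that is, $N(b)\cap X=\{a\}$.

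I will call such $X$ a \emph{critical set for $e$}. The same holds with the roles of $A$ and $B$ swapped. A critical $X\subseteq A$ also produces a critical set on the $B$ side, namely $B\setminus N(X)$, whose neighbourhood lies in $A\setminus X$. This lets me work with whichever side is convenient.

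Among all edges of $C$ and all critical sets (on either side) for them, choose a pair $(e,X)$ with $|X|$ minimum. Say $e=ab$ with $X\subseteq A$. Since $b$ lies on $C$, it has a second cycle neighbour $a'\neq a$. Because $N(b)\cap X=\{a\}$, we get $a'\notin X$. Take a critical set $Y$ for $e'=a'b$. Then $a'\in Y$, and $b$ has $a'$ as its only neighbour in $Y$, so $a\notin Y$.

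Now $b\in N(X)\cap N(Y)$, but $b\notin N(X\cap Y)$, since the only $X$-neighbour of $b$ is $a\notin Y$. Hence the strict uncrossing inequality gives
\[
f(X\cup Y)+f(X\cap Y)\le f(X)+f(Y)-1=3 .
\]
If $X\cap Y\neq\emptyset$ and $|X\cup Y|\le |A|-2$, the brace condition forces both terms on the left to be at least $2$, which is a contradiction. So the generic case closes immediately.

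The main obstacle is the degenerate cases, and this is where minimality of $|X|$ and the degree condition should enter. \emph{Case $X\cap Y=\emptyset$:} here I would exploit $d(b)\ge4$. The vertex $b$ has at least two further neighbours, and neither lies in $X\cup Y$. I would also use the cycle edge at $a$ other than $e$, together with its own critical set. The aim is to produce a critical set strictly smaller than $X$, or one that meets $X$, which returns us to the generic case; for example, intersect with the complementary critical set $B\setminus N(Y)$ on the other side. \emph{Case $|X\cup Y|\ge |A|-1$:} here I would pass to complements, replacing $X$ and $Y$ by the $B$-side critical sets $B\setminus N(X)$ and $B\setminus N(Y)$. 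Their union is then small, and their intersection contains the relevant cycle vertex, so the same uncrossing applies on the other side. I expect most of the care to go into checking that these two reductions really contradict the minimal choice of $|X|$ rather than looping between sides.
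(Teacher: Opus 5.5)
\textbf{There is a genuine gap: the one step you carry out is vacuous, and the cases you defer are the whole proof.}

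The ``generic'' uncrossing uses neither $d\ge4$, nor the cycle, nor the minimality of $|X|$. It applies to any vertex $b$ and any two neighbours $a\ne a'$ in any minimal brace. So what it shows is that the generic configuration never occurs at all. The ``degenerate'' cases are therefore the only cases, and for them you give intentions rather than arguments.

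The sketches for those cases also go wrong. The dual of a critical set $X$ for $ab$ is $X^*=(B\setminus N(X))\cup\{b\}$ (Lemma~\ref{lem:critical-set}), not $B\setminus N(X)$. Write $q=|A|$. When $|X\cup Y|\ge q-1$, we have $N(X)\cup N(Y)=B$ because $\delta(G)\ge3$. So your sets $B\setminus N(X)$ and $B\setminus N(Y)$ are disjoint and contain no cycle vertex. With the correct duals, $X^*\cap Y^*=\{b\}$ exactly, so there is no strictness witness. The contradiction has to come from the degree of $b$: $f(X^*\cup Y^*)\le 4-f(\{b\})=5-d(b)\le1$, although $1\le|X^*\cup Y^*|=2q-3-|X|-|Y|\le q-2$.

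Combining this with your generic case gives a local statement. Whenever $d(b)\ge4$, critical sets $X$ for $ab$ and $Y$ for $a'b$ (on the side of $a,a'$) are \emph{always} disjoint with $|X|+|Y|\le q-2$. So the case $X\cap Y=\emptyset$ cannot be refuted at $b$; it is the normal state at every vertex of degree at least four. In particular, no critical set for $a'b$ on that side can ever ``meet $X$'', and nothing local at $b$ distinguishes a cycle from a path.

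\textbf{The missing idea is to use this local inequality as the engine, not as something to contradict.} Since $|X|+|X^*|=q-1$, it becomes a strict decrease of critical-set sizes along $C$, but only in the right direction. Working from $b$, as you do, you only get $|Y|\le q-2-|X|$, which is useless for minimality. Instead, let $s$ be the other cycle neighbour of $a$, and let $W$ be a critical set for $as$ on the side of $s$. Apply the local statement at $a$, which has degree at least four, to $X^*$ and $W$:
\[ |W|\le q-2-|X^*|=|X|-1, \]
which contradicts the minimal choice of $(e,X)$.

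Equivalently, write $C=v_0v_1\cdots v_{\ell-1}v_0$ and let $R_i$ be critical for $v_iv_{i+1}$ on the side of $v_{i+1}$. Applying the local statement at $v_{i+1}$ to the dual of $R_i$ and to $R_{i+1}$ gives $|R_{i+1}|\le|R_i|-1$ for every $i$, which is impossible around a cycle.

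Your remark about using the other cycle edge at $a$ points in this direction. Without the corrected duals and the $d\ge4$ argument for the case $|X\cup Y|\ge q-1$, though, it is not yet a proof. The paper itself gives no proof to compare with; it simply cites Lou for this lemma.
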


The following result follows from Propositions 2.2, 2.4 and 2.6 in \cite{FKC}.

\begin{lemma}[\cite{FKC}]\label{lem:small}
The following statements hold.
\begin{enumerate}[label=(\roman*),nosep]
\item\label{item:small-six-eight}
The only minimal brace of order $6$ is $K_{3,3}$, and the only one of order $8$ is $B_8$.
\item\label{item:small-ten}
The only minimal braces of order $10$ are $M_{10},B_{10}$ and $Q_{10}^{+}$,
whose sizes are $15$, $16$, and $16$, respectively, and whose numbers of cubic vertices are $10$, $8$, and $8$, respectively.
\item\label{item:small-twelve} The only minimal braces of order $12$ and size at least
$20$ are $B_{12}$ and $Q_{12}$, both of which have size $20$ and have $10$ and $8$ cubic vertices, respectively.
\end{enumerate}
\hspace{2mm}In particular, each of $B_8, B_{10}, Q_{10}^{+}$ and $Q_{12}$ is a minimal brace.
\end{lemma}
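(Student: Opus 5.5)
The lemma is a finite classification, so I will prove it by generating all candidates of orders $6$, $8$, $10$ and $12$ from smaller braces and then filtering by minimality. For every candidate, both the brace property and minimality will be checked through Lemma~\ref{lem:brace}\ref{item:brace-neighbourhood}. A graph $G$ is minimal exactly when, for every edge $e$, some nonempty $X\subseteq A$ with $|X|\le|A|-2$ satisfies $|N_{G-e}(X)|\le|X|+1$. Throughout I use $\delta\ge3$ (Lemma~\ref{lem:brace-3-connected}) and the forest structure of $V_{\ge4}$ (Lemma~\ref{lem:forest}).

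\textbf{Order $6$.} Here $|A|=3$ and $\delta\ge3$, so every vertex is adjacent to all three vertices on the other side, and $G\cong K_{3,3}$. It is minimal, since deleting an edge creates a vertex of degree two.

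\textbf{Order $8$.} Take a minimal brace $G$ with $|A|=4$. Suppose some $a\in A$ had degree $4$. By Lemma~\ref{lem:forest}, the noncubic vertices induce a forest, and each cubic vertex of $B$ sends at most three edges into $A$. I will combine this with the degree constraints and with condition~\eqref{eq:hall} applied to $2$-subsets of $A$. Each pair in $A$ must have at least four common-plus-private neighbours, that is, $N(\{a,a'\})=B$. This should force the following: if $G$ has a vertex of degree $4$, then some edge at that vertex is deletable while~\eqref{eq:hall} still holds. So $G$ is cubic. A cubic bipartite graph on $8$ vertices in which every pair of $A$ covers $B$ is the cube $B_8$; this is easy to check by hand (the only alternatives are $2K_{3,3}$-type configurations, which are disconnected or fail~\eqref{eq:hall}). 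Minimality of $B_8$ holds because it is cubic.

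\textbf{Orders $10$ and $12$.} Here I would use the thin-edge machinery, namely McCuaig's theorem that every brace not in $\mathcal G$ has a strictly thin edge. Let $G$ be a minimal brace not in $\mathcal G$, and let $e$ be strictly thin. Then $e$ cannot have index $0$, since otherwise $G-e$ would itself be a brace. Hence $\widehat{G-e}$ is a brace of order $n-2$ or $n-4$. Deleting further edges gives a minimal spanning sub-brace, so $G$ is recovered from a minimal brace $H$ of order $n-2$ or $n-4$ in two steps. First, re-add a set $F$ of edges. Second, expand by one or two vertices of degree two (subdivide an edge, or split a vertex, in the index-$1$ and index-$2$ cases) and add $e$.

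For $n=10$, the starting graph $H$ is $K_{3,3}$ or $B_8$. The number of ways to re-add edges is bounded by the requirement that the result has $\delta\ge3$ and that its noncubic vertices induce a forest. Enumerating up to the automorphisms of $K_{3,3}$ and $B_8$, and testing minimality via~\eqref{eq:hall}, should leave exactly $M_{10}$, $B_{10}$ and $Q_{10}^+$. Their sizes $15$, $16$, $16$ and cubic counts $10$, $8$, $8$ can then be read off directly. The members of $\mathcal G$ of order $10$ are $M_{10}$, $B_{10}$, and the prism on $10$ vertices (which is not bipartite-brace-valid for $r=5$ odd). These are added to the list directly.

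For $n=12$ with $m\ge20$, the same extension from order-$10$ and order-$8$ minimal braces applies. The size constraint $m\ge20$ prunes the search heavily: by the bound $n_3\ge\frac{m-n+1}{2}$ together with the forest structure, most extensions give too few edges. The survivors should be $B_{12}$ and $Q_{12}$.

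The main obstacle is the order-$12$ enumeration. The number of expansions is large, and verifying minimality means exhibiting, for each edge, a violating set $X$. To keep this manageable I would first pin down the degree sequence. Using $m\ge20$, $n=12$, and the fact that $G[V_{\ge4}]$ is a forest, $m=20$ is forced, with degree sequences such as $(4^4,3^8)$ or $(5^2,3^{10})$. I would then treat each degree sequence separately, using the structure of the tree on the noncubic vertices to place them.
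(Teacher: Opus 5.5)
The paper gives no proof of this lemma; it quotes it from \cite{FKC} (Propositions~2.2, 2.4 and~2.6). A self-contained classification would therefore be a different route from the paper's.

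Your orders $6$ and $8$ are essentially fine once the hedged step is written out. Suppose $d(a)=4$ for some $a\in A$. Then $N(a)=B$ and $m\ge 4+3\cdot 3=13$, so some $b\in B$ also has degree $4$. In $G-ab$ every pair $\{a,a'\}$ still covers $B$, because $N(b)=A$. So $G-ab$ is a brace, contradicting minimality. Hence $G$ is cubic, its complement in $K_{4,4}$ is a perfect matching, and $G\cong B_8$. Your aside about $2K_{3,3}$-type configurations is spurious, since that graph has twelve vertices.

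The genuine gap is in (ii) and (iii), which carry all the content of the lemma. There you describe a search and then assert its result (``should leave exactly'', ``the survivors should be''), so nothing is actually proved. Two ingredients of the plan are also wrong:
\begin{itemize}
\item The inverse of bicontraction is, in both index cases, a bi-splitting of a vertex $x'$ into a path $x_{11}vx_{12}$, followed by adding $e$. Index $1$ splits one vertex and index $2$ splits two. It is never the subdivision of an edge, which would create odd cycles.
\item The pruning claim that $n=12$, $m\ge 20$ and the forest structure force $m=20$ does not follow. Take $B_{12}$ and join its two hubs. This is a brace of order $12$ and size $21$; its noncubic vertices induce $K_2$, and it has ten $3$-lines. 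It passes every counting test you invoke: degree sums, Lemma~\ref{lem:forest}, $n_3\ge\frac{m-n+1}{2}$, and even Theorem~\ref{thm:transversal}. Only minimality excludes it.
\end{itemize}
In general $2m=4n-n_3+\rho$ when $\delta\ge 3$. For $n=12$ this gives $m\le 20$ if and only if $\rho\le n_3-8$, which counting alone cannot enforce. You need Lemma~\ref{lem:size} (again from \cite{FKC}) or a real edge-deletion argument.

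If you want an actual proof, order $10$ has a short one via complements. For $|A|=|B|=5$, $G$ is a simple brace if and only if $K_{5,5}-E(G)$ has maximum degree at most $2$ and no $4$-cycle. So minimal braces correspond to edge-maximal such subgraphs. These are exactly $C_{10}$, $C_8\cup K_2$ and $C_6\cup P_4$, which give $M_{10}$, $B_{10}$ and $Q_{10}^{+}$. Order $12$ admits the same reformulation: maximum degree at most $3$ and no $K_{2,3}$ subgraph. The resulting case analysis is substantial, and it is precisely what your proposal does not supply.
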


\begin{lemma}[\cite{FKC}]\label{lem:size}
Let $G$ be a minimal brace of order $n\ge12$. Then $|E(G)|\le\frac{5n}{2}-10$, and
equality holds if and only if $G$ is isomorphic to either $B_{12}$ or $Q_n$.
In particular, every minimal brace of order $14$ has size at most $25$, and equality holds precisely for $Q_{14}$.
\end{lemma}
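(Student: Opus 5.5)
The main inequality $|E(G)|\le\frac{5n}{2}-10$ for $n\ge12$, together with the characterization of equality by $B_{12}$ and $Q_n$, is exactly the extremal size result of \cite{FKC} for minimal $2$-extendable bipartite graphs. By the remark after Lemma~\ref{lem:brace-3-connected}, a bipartite graph of order at least six is a minimal brace if and only if it is minimal $2$-extendable. So I would not reprove the first assertion. I would quote it from \cite{FKC} after translating it through that equivalence.

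The only genuinely new content is the ``in particular'' clause, which I would derive by specialization. Put $n=14$. The bound gives $|E(G)|\le\frac{5\cdot 14}{2}-10=25$. Equality holds only if $G\cong B_{12}$ or $G\cong Q_{14}$. Since $B_{12}$ has order $12\neq14$, the first alternative is excluded, so equality holds precisely when $G\cong Q_{14}$.

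For the converse, I would check directly that $Q_{14}$ has order $14$ and size $25$. Here $\ell=5$. Each $x_i$ is adjacent to $q_1,q_2,y_i$, and each $y_i$ is additionally adjacent to $p_1,p_2$. This gives $2\ell+2\ell+\ell=5\ell=25$ edges, matching $\frac{5n}{2}-10$. That $Q_{14}$ is a minimal brace is part of the equality statement of \cite{FKC}. It can also be confirmed by checking condition~\ref{item:brace-neighbourhood} of Lemma~\ref{lem:brace} and observing that deleting any edge creates a set $X$ violating \eqref{eq:hall}.

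If an independent proof of the main bound were wanted, I would argue by induction on $n$. The base cases would be the small minimal braces listed in Lemma~\ref{lem:small}. The inductive step would use a minimality-preserving pair $(e,F)$: delete a strictly thin edge, bicontract the degree-two vertices (at most two, reducing the order by $2\cdot\mathrm{index}(e)$), and remove $F$ to return to a minimal brace. One would then bound the net loss of edges against the loss of order. The main obstacle would be guaranteeing that a strictly thin edge exists whose index and $F$ give a loss of at least $5$ edges per $2$ vertices, and tracking equality back to the family $Q_n$. Since the paper cites the result, I would not carry this out here.
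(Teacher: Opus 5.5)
Your proposal is correct and takes essentially the same approach as the paper, which gives no proof of its own and cites \cite{FKC} for the bound $|E(G)|\le\frac{5n}{2}-10$ and its equality cases. The clause for $n=14$ is the same direct specialization you carry out: $B_{12}$ is excluded because its order is $12$, and $|E(Q_{14})|=5\ell=25$ with $\ell=5$.
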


The following result follows from Proposition 1.13 and Theorem 3.12 in \cite{FKC}.

\begin{lemma}[\cite{FKC}]\label{lem:induction}
Every minimal brace $G\notin\mathcal G$ has a minimality-preserving
pair $(e,F)$ of index $k\in\{1,2\}$. Let
$J=\widehat{G-e}-F$. Then
\[
 |V(J)|=|V(G)|-2k, \
 |E(J)|=|E(G)|-1-2k-|F|,\ |F|\le k+1.
\]
Moreover, either $F=\emptyset$, or $F$ satisfies the following properties in $G-e$:
\begin{enumerate}[label=(\roman*),nosep]
\item\label{item:induction-one} If $k=1$, all edges of $F$ are incident with one outer vertex in $G-e$;
\item\label{item:induction-one-1} If $k=1$ and $|F|=2$, then $G$ is isomorphic to a stable-extension of $J$;
\item\label{item:induction-two} If $k=2$, there are two adjacent outer vertices $u,w$ such that
$F\subseteq\partial_G(u)\cup\partial_G(w)$;
\item\label{item:induction-ends} For every edge $f\in F$, an end of $f$ is cubic  if and
only if it is an outer vertex in $G-e$.
\end{enumerate}
\end{lemma}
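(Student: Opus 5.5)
The statement immediately above is Lemma~\ref{lem:induction}, which the paper attributes to Proposition 1.13 and Theorem 3.12 of \cite{FKC}. The plan is therefore to recover it from those sources and to check directly the parts that are pure bookkeeping.

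\textbf{Existence of the pair.} I would take existence from \cite{FKC}. Theorem 3.12 there gives, for every minimal brace $G\notin\mathcal G$, a strictly thin edge $e$ of positive index. It also gives a set $F$ with $|F|\le k+1$ such that $\widehat{G-e}-F$ is a minimal brace. Proposition 1.13 describes where the edges of $F$ can lie: near the outer vertices, in the configurations listed in \ref{item:induction-one}--\ref{item:induction-two}.

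\textbf{Counting.} This part I would verify directly.
\begin{itemize}
\item Deleting $e$ creates exactly $k$ vertices of degree two, and $k\ge1$ because the index is positive.
\item Each bicontraction removes $2$ vertices and $2$ edges. Since $e$ is strictly thin, $\widehat{G-e}$ is simple, so no parallel edges need to be removed.
\item This gives $|V(J)|=|V(G)|-2k$ and $|E(J)|=|E(G)|-1-2k-|F|$.
\end{itemize}

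\textbf{Property \ref{item:induction-ends}.} I would argue as follows. An edge $f\in F$ is deletable from $\widehat{G-e}$ while keeping a brace, so by Lemma~\ref{lem:brace-3-connected} both ends of $f$ have degree at least $4$ in $\widehat{G-e}$.
\begin{itemize}
\item A vertex that is neither inner nor outer keeps its degree from $G$. If it is an end of $f$, its degree in $G$ is therefore at least $4$, so it is noncubic.
\item An outer vertex that survives as part of a contracted vertex gains degree. Identifying such a contracted vertex with an outer end of $f$ in $G-e$, the end has degree $3$ in $G$ exactly when the contraction is what lifts it to degree $\ge4$.
\end{itemize}
This is the subtle point, and I would check it against the precise formulation in \cite{FKC}.

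\textbf{Property \ref{item:induction-one-1}.} Suppose $k=1$ and $|F|=2$. Let $x$ be the inner vertex and $v$ the outer vertex on which both edges of $F$ are incident. I would reconstruct $G$ from $J$ by un-contracting: the vertex $x$, the edge $e$, the two edges of $F$, and the edges at $v$ should form exactly the five new edges of a stable-extension. I expect this identification to be the main obstacle. Checking that the four attachment vertices form a stable set of $J$ uses the minimality of $G$: any adjacency among them would make an edge of $G$ deletable while $G$ stays a brace, which I would test with the criterion in Lemma~\ref{lem:brace}\ref{item:brace-neighbourhood}.
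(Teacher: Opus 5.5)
The paper gives no proof of this lemma; it imports it wholesale from Proposition~1.13 and Theorem~3.12 of \cite{FKC}. Your reliance on \cite{FKC} for the existence of $(e,F)$, for $|F|\le k+1$ and for \ref{item:induction-one} and \ref{item:induction-two} therefore matches the paper. Your vertex and edge count is correct: strict thinness guarantees that no parallel edges are discarded. The trouble is in the two items you propose to check yourself.

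\textbf{Property \ref{item:induction-ends}.} Your degree argument proves only one direction. Since $\widehat{G-e}-f$ contains the brace $J$ as a spanning subgraph, both ends of $f$ have degree at least $4$ in $\widehat{G-e}$. This does show that an end of $f$ that is not outer is noncubic in $G$. (For $k=1$ the noncubic end of $e$ is such a vertex and loses one degree, so ``keeps its degree'' is not literally right, but the inequality goes the right way.) It says nothing about outer ends. If $v$ is inner with outer neighbours $x_{11},x_{12}$, the merged vertex has degree $d_G(x_{11})+d_G(x_{12})-2\ge4$ whether or not $x_{11}$ is cubic, so your second bullet is circular.

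The missing ingredient is the minimality of $G$. Suppose $f$ has exactly one outer end $x_{11}$ and $d_G(x_{11})\ge4$. Then $G-f$ has minimum degree at least $3$, $e$ still has index $k$ in $G-f$, and $\widehat{(G-f)-e}=\widehat{G-e}-f$ is a simple brace. A routine check of Lemma~\ref{lem:brace}\ref{item:brace-neighbourhood} shows that undoing the bicontractions and restoring $e$ preserves the brace property. So $G-f$ would be a brace, a contradiction.

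This argument fails for an edge $f$ joining two outer vertices (possible when $k=2$) if only one of its ends is noncubic, because $G-f$ then has a vertex of degree $2$. That is exactly the configuration where the paper later uses \ref{item:induction-ends}, namely for $x_{11}y_{21}$ in Case~2 of Lemma~\ref{lem:fourteen}. There you really do need the statement of \cite{FKC}.

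\textbf{Property \ref{item:induction-one-1}.} Write $e=uv$ with $v$ inner and $F=\{x_{11}r_1,x_{11}r_2\}$. Your reconstruction has $G=J$ plus $v$ and $x_{11}$, with new edges $vu,vx_{11},vx_{12},x_{11}r_1,x_{11}r_2$ and attachment vertices $u,x',r_1,r_2$. This presupposes $N_G(x_{11})=\{v,r_1,r_2\}$, which is the unproved half of \ref{item:induction-ends}.

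The stable-set condition also has two parts with different sources. First, $x'r_i\notin E(J)$ follows from strict thinness, since $x_{11}r_i$ and $x_{12}r_i$ would be parallel in $\widehat{G-e}$; it does not come from minimality. Second, $ur_i\notin E(J)$ needs a real argument, and ``some edge would become deletable'' does not yet say which edge or why.

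Either supply these arguments or, as the paper does, take \ref{item:induction-one-1} and \ref{item:induction-ends} directly from \cite{FKC}.
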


By Lemma~\ref{lem:induction}\ref{item:induction-ends}, for every edge $f\in F$, an end of $f$ is cubic in $G-e$ if and only if it is an outer vertex of $G-e$. Since every end of $f$ has the same degree in $G$ as in $G-e$, an end of $f$ is cubic in $G$ if and only if it is an outer vertex of $G-e$. We shall use this result without further mention whenever it is cited later.

\begin{lemma}\label{lem:four-matching}
Let $H[A,B]$ be a simple balanced bipartite graph with $|A|=|B|=4$.
Suppose that the degrees of vertices in $A$ are $3,2,2,2$,  every vertex in $B$ has degree at most $3$, and $H$ has no isolated vertices. Then $H$ has a perfect matching.
\end{lemma}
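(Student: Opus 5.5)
We argue by Hall's theorem (Lemma~\ref{lem:hall}). Let $A=\{a_1,a_2,a_3,a_4\}$ with $d(a_1)=3$ and $d(a_2)=d(a_3)=d(a_4)=2$. Then $|E(H)|=9$. Since every vertex of $B$ has degree at most $3$ and none is isolated, the degree sequence of $B$ is either $(3,3,2,1)$, $(3,2,2,2)$ or $(3,3,3,0)$ up to order; the last is excluded because it would force an isolated vertex. So every vertex of $B$ has degree at least $1$. It suffices to show $|N(X)|\ge|X|$ for every nonempty $X\subseteq A$, and we check this by the size of $X$.

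For $|X|=1$ the inequality is immediate because no vertex of $A$ is isolated. For $|X|=2$, suppose $|N(X)|\le1$. Then both vertices of $X$ have degree at most $1$ in a simple graph, which contradicts $d\ge2$; indeed a single vertex of degree $2$ already has two distinct neighbours, by simplicity. For $|X|=4$, we have $N(A)=B$ because no vertex of $B$ is isolated.

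The only substantive case is $|X|=3$, where we must rule out $|N(X)|\le2$. Suppose $|N(X)|\le 2$. Then every vertex of $X$ has degree at most $2$, so $a_1\notin X$ and $X=\{a_2,a_3,a_4\}$. Each of these three vertices has two distinct neighbours inside a common set $N(X)$ of size at most $2$, so $N(X)=\{b,b'\}$ and each of $a_2,a_3,a_4$ is adjacent to both $b$ and $b'$. Hence $d(b)\ge3$ and $d(b')\ge3$, so both have degree exactly $3$ and are not adjacent to $a_1$. The three edges at $a_1$ therefore go to the other two vertices of $B$. By simplicity $a_1$ has three distinct neighbours, but only two vertices of $B$ remain, which is a contradiction. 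Thus Hall's condition holds and $H$ has a perfect matching.

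The expected main obstacle is this $|X|=3$ case, where the degree cap of $3$ on $B$ together with simplicity is what closes the argument.
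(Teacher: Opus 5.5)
Your proof is correct and follows essentially the same route as the paper: Hall's condition is checked by the size of $X$, and the only real case is $X=\{a_2,a_3,a_4\}$ with a common two-element neighbourhood. There the degree cap of $3$ on $B$ forces both of those neighbours to be nonadjacent to $a_1$, contradicting $d(a_1)=3$. Your opening enumeration of the degree sequences of $B$ is superfluous, since ``every vertex of $B$ has degree at least $1$'' is just the no-isolated-vertices hypothesis, but it does no harm.
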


\begin{proof}
Let $A=\{a_1,a_2,a_3,a_4\}$ such that $d(a_1)=3$. Then $d(a_2)=d(a_3)=d(a_4)=2$.
By Lemma~\ref{lem:hall}, it suffices to prove $|N(X)|\ge|X|$
for every $X\subseteq A$. This is immediate for $X=\emptyset$.
Assume that $X\ne\emptyset$. Since every vertex in $A$ has at least
two neighbours, $|N(X)|\ge2$.
So the result holds when $|X|\le2$.
Since $H$ has no isolated vertices, we have $N(A)=B$ and so $|N(A)|=|B|=|A|$.
This implies that $|N(X)|=|X|$ when $X=A$.

Now assume that $|X|=3$.
If $a_1\in X$, then $|N(X)|\ge |X|$ since $d(a_1)=3$.
If $a_1\notin X$, then $X=\{a_2,a_3,a_4\}$.
Since  $|N(X)|\ge 2$, if $|N(X)|<|X|$, then $|N(X)|=2$.
It follows that $N(a_2)=N(a_3)=N(a_4)$ and $|N(a_2)|=2$.
Since every vertex in $B$ has degree at most $3$,
each vertex in $N(a_2)$ is nonadjacent to $a_1$.
This implies that $a_1$ has at most two neighbours in $B$, contradicting the fact that $d(a_1)=3$.
Thus, we have $|N(X)|\ge |X|$.
\end{proof}

Let $H[A,B]$ be a simple brace with $|A|=|B|=q\ge3$, and let
$uv\in E(H)$, where $u\in A$ and $v\in B$.
A subset $X\subseteq A$ is called \emph{critical} for $uv$ in $H$ if it
satisfies the conditions in~\eqref{eq:critical-set} below.
Similarly, a subset $Y\subseteq B$ is called \emph{critical}  for $uv$ in $H$ if it
satisfies the conditions $1\le |Y|\le q-2$, $v\in Y$, $N_H(u)\cap Y=\{v\}$,  and $|N_H(Y)|=|Y|+2$.

\begin{lemma}\label{lem:critical-set}
Let $H[A,B]$ be a simple brace with $|A|=|B|=q\ge3$, and let $uv$
be an edge of $H$ with $u\in A,v\in B$. Then $H-uv$ is not a brace
if and only if there is an $X\subseteq A$ satisfying
\begin{equation}\label{eq:critical-set}
 1\le|X|\le q-2,\ u\in X,\
 N_H(v)\cap X=\{u\},\ |N_H(X)|=|X|+2.
\end{equation}
For such a set $X$, the set
$Y=B\setminus(N_H(X)\setminus\{v\})$ is a critical set for $uv$
of size $q-|X|-1$ in $H$ (called the \emph{dual critical set} of $X$ in $H$).
\end{lemma}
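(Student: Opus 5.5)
We use the neighbourhood characterization of Lemma~\ref{lem:brace}\ref{item:brace-neighbourhood} for both $H$ and $H-uv$. Since $H-uv$ is connected, balanced and has $|A|\ge3$, Lemma~\ref{lem:brace} applies to it. Hence $H-uv$ fails to be a brace exactly when some nonempty $X\subseteq A$ with $|X|\le q-2$ has $|N_{H-uv}(X)|\le |X|+1$. Here we should check that $H-uv$ is connected: a brace of order at least six is $3$-connected by Lemma~\ref{lem:brace-3-connected}, hence $2$-edge-connected, and the case $q=3$ (so $H=K_{3,3}$) is immediate.

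For the forward direction, fix such an $X$. Deleting one edge changes $N(X)$ by at most one vertex, and $|N_H(X)|\ge|X|+2$ because $H$ is a brace. So $N_{H-uv}(X)$ must be strictly smaller than $N_H(X)$. This can only happen if $u\in X$, $v\in N_H(X)$, and $v$ has no other neighbour in $X$, that is, $N_H(v)\cap X=\{u\}$. In that case $|N_{H-uv}(X)|=|N_H(X)|-1$, which forces $|N_H(X)|=|X|+2$, giving all conditions in \eqref{eq:critical-set}. Conversely, if $X$ satisfies \eqref{eq:critical-set}, the same computation gives $|N_{H-uv}(X)|=|X|+1$, so $H-uv$ violates \eqref{eq:hall} and is not a brace.

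For the dual set, let $Y=B\setminus(N_H(X)\setminus\{v\})$. We have $|N_H(X)\setminus\{v\}|=|X|+1$ because $v\in N_H(X)$, so $|Y|=q-|X|-1$. Since $1\le|X|\le q-2$, this gives $1\le |Y|\le q-2$. Clearly $v\in Y$. Every neighbour of $u$ other than $v$ lies in $N_H(X)\setminus\{v\}$, so $N_H(u)\cap Y=\{v\}$. It remains to show $|N_H(Y)|=|Y|+2$. Vertices of $Y\setminus\{v\}$ lie outside $N_H(X)$, so all their neighbours are in $A\setminus X$. The vertex $v$ has neighbours in $A\setminus X$ together with $u$, and no other vertex of $X$. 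Hence $N_H(Y)\subseteq (A\setminus X)\cup\{u\}$, which has size $q-|X|+1=|Y|+2$. The reverse inequality $|N_H(Y)|\ge|Y|+2$ holds by Lemma~\ref{lem:brace}\ref{item:brace-neighbourhood}, applied with the roles of $A$ and $B$ swapped. That lemma is symmetric in the colour classes, and its hypothesis $1\le|Y|\le q-2$ is satisfied. Therefore equality holds.

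The only delicate point is the accounting in the forward direction: ruling out that deleting $uv$ shrinks $N(X)$ unless exactly the conditions $u\in X$ and $N_H(v)\cap X=\{u\}$ hold. That step is direct, so I expect no real obstacle. The main care is in confirming that Lemma~\ref{lem:brace} applies to $H-uv$, which needs connectivity.
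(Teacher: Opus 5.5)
Your proposal is correct and follows essentially the same argument as the paper: connectivity of $H-uv$ from Lemma~\ref{lem:brace-3-connected}, the neighbourhood criterion of Lemma~\ref{lem:brace}\ref{item:brace-neighbourhood} in both directions, and for the dual set the containment $N_H(Y)\subseteq(A\setminus X)\cup\{u\}$ combined with the brace lower bound $|N_H(Y)|\ge|Y|+2$. Your separate treatment of the case $q=3$ is unnecessary, since $q\ge3$ already means $H$ has order at least six, so Lemma~\ref{lem:brace-3-connected} applies directly.
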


\begin{proof}
Since $H$ is a brace,  Lemma \ref{lem:brace-3-connected} implies that $H$ is $3$-connected. Then $H-uv$ is connected.
Suppose that there exists a set $X$ satisfying \eqref{eq:critical-set}.
Then $|N_{H-uv}(X)|=|X|+1$. Thus, Lemma~\ref{lem:brace}\ref{item:brace-neighbourhood} implies that $H-uv$ is not a brace.
Conversely, suppose that $H-uv$ is not a brace.
Then, by Lemma \ref{lem:brace}\ref{item:brace-neighbourhood}, there is a subset $X\subseteq A$ with $1\le |X|\le q-2$ such that $|N_{H-uv}(X)|\le|X|+1$.
Since $H$ is a brace, again by Lemma \ref{lem:brace}\ref{item:brace-neighbourhood}, we have $|N_H(X)|\geq|X|+2$.
It follows that $u\in X$, $N_H(v)\cap X=\{u\}$ and $|N_H(X)|=|X|+2$.

For the set $Y$,  we have $v\in Y$ and  $|Y|=q-(|N_H(X)|-1)=q-|X|-1$, and so $1\le|Y|\le q-2$.
Since $u\in X$, all neighbours of $u$ other than $v$ belong to $N_H(X)\setminus\{v\}$, which
implies that $N_H(u)\cap Y=\{v\}$. Note that no vertex of $Y\setminus\{v\}$
has a neighbour in $X$, and $v$ has only the neighbour $u$ in $X$.
Then $N_H(Y)\subseteq(A\setminus X)\cup\{u\}$. Note that $|(A\setminus X)\cup\{u\}|=q-|X|+1=|Y|+2$.
Since $H$ is a brace, Lemma~\ref{lem:brace}\ref{item:brace-neighbourhood} implies that $|N_H(Y)|\ge |Y|+2$.
Thus $N_H(Y)=(A\setminus X)\cup\{u\}$ and $|N_H(Y)|=|Y|+2$.
Hence, $Y$ is critical for $uv$ in $H$. Moreover, we have
$A\setminus(N_H(Y)\setminus\{u\})=X$, so the dual critical set of $Y$ is $X$.
\end{proof}

\begin{lemma}\label{lem:fourteen-critical-set}
Let $G[A,B]$ be a simple brace in which each colour class consists
of four cubic vertices and three vertices of degree $4$.
Then $|E(G[V_3])|=|E(G[V_4])|$.
Furthermore, for an edge $uv$ of $G$, $u\in A,v\in B$, whose ends both have degree $4$, the following statements hold.
\begin{enumerate}[label=(\roman*),nosep]
\item\label{item:fourteen-critical-set-one}
Every critical set for $uv$ has size $2$, $3$ or $4$;

\item\label{item:fourteen-critical-set-two}
A critical set of size $2$ contained in $A$ gives a cubic vertex $w\in A$ with
$N(w)=N(u)\setminus\{v\}$;

\item\label{item:fourteen-critical-set-ends}
A critical set of size $4$ contained in $A$ gives a cubic vertex $z\in B$ with $N(z)=N(v)\setminus\{u\}$.
\end{enumerate}
\end{lemma}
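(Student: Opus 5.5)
Here $|A|=|B|=7$, so $q=7$. In each colour class the degree sum is $4\cdot3+3\cdot4=24$, and this equals $|E(G)|$. I will prove the three parts in order, spending most of the effort on part (iii).

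\emph{The edge count.} Count edges by the degree types of their ends. Let $e_{33},e_{34},e_{43},e_{44}$ be the numbers of edges whose end in $A$ has degree $3$ or $4$ and whose end in $B$ has degree $3$ or $4$, respectively. Summing degrees over the cubic vertices of $A$ gives $e_{33}+e_{34}=12$, and over the cubic vertices of $B$ gives $e_{33}+e_{43}=12$. Summing over the degree-$4$ vertices of $A$ gives $e_{43}+e_{44}=12$. Hence $e_{34}=e_{43}=12-e_{33}$ and $e_{44}=12-e_{43}=e_{33}$. Since $e_{33}=|E(G[V_3])|$ and $e_{44}=|E(G[V_4])|$, this is the first assertion.

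\emph{Part (i).} Let $X\subseteq A$ be critical for $uv$, so $1\le|X|\le5$ and $u\in X$.
\begin{itemize}
\item $|X|\ne1$: otherwise $X=\{u\}$ and $|N(u)|=4\ne3$.
\item $|X|\ne5$: the dual critical set $Y$ from Lemma~\ref{lem:critical-set} would have size $q-|X|-1=1$, so $Y=\{v\}$ with $|N(v)|=3$, contradicting $d(v)=4$.
\end{itemize}
So $|X|\in\{2,3,4\}$. A critical set contained in $B$ is the dual of one contained in $A$, so the same argument (or the symmetric one) gives the same conclusion there.

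\emph{Part (ii).} Let $X=\{u,w\}$. Then $|N(X)|=4=|N(u)|$, so $N(w)\subseteq N(u)$. The condition $N(v)\cap X=\{u\}$ gives $v\notin N(w)$, so $N(w)\subseteq N(u)\setminus\{v\}$, a set of size $3$. Since $\delta\ge3$, equality holds: $N(w)=N(u)\setminus\{v\}$ and $w$ is cubic.

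\emph{Part (iii).} Now $|X|=4$, so the dual critical set $Y\ni v$ has size $2$. Write $Y=\{v,z\}$. Then $|N(Y)|=4=|N(v)|$, so $N(z)\subseteq N(v)$. The condition $N(u)\cap Y=\{v\}$ gives $u\notin N(z)$. Hence $N(z)\subseteq N(v)\setminus\{u\}$, and as before $N(z)=N(v)\setminus\{u\}$ with $z$ cubic.

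The main obstacle is only the bookkeeping in part (iii): I must check that the dual of Lemma~\ref{lem:critical-set} really yields a $B$-side critical set of size $q-|X|-1=2$ satisfying $N(u)\cap Y=\{v\}$. The lemma states both facts explicitly.
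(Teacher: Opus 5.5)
Your proof is correct and follows essentially the same route as the paper: degree counting for the edge identity, neighbourhood containment forcing $N(w)=N(u)\setminus\{v\}$ in (ii), and the size-$2$ dual critical set from Lemma~\ref{lem:critical-set} in (iii). The only cosmetic difference is in (i). The paper gets $|X|\le 4$ directly from $X\subseteq A\setminus(N(v)\setminus\{u\})$, while you rule out $|X|=5$ because its dual critical set would be $\{v\}$; both arguments are valid.
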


\begin{proof}
By the assumption, we have $|V_3|=8$, $|V_4|=6$, and $2|E(G[V_3])|+|E(V_3,V_4)|=3|V_3|=24$, and $2|E(G[V_4])|+|E(V_3,V_4)|=4|V_4|=24$. It follows that $|E(G[V_3])|=|E(G[V_4])|$.

(i) By definition, a critical set $X\subseteq A$ for $uv$ satisfies $N(v)\cap X=\{u\}$ and
$|N(X)|=|X|+2$.
This implies that $u\in X\subseteq A\setminus\big(N(v)\setminus\{u\}\big)$.
Since $d(u)=4$, we have $X\neq \{u\}$ and so $|X|\ge 2$.
Since $d(v)=4$ and $|A|=7$, we have $|A\setminus\big(N(v)\setminus\{u\}\big)|=4$ and so $|X|\le 4$.
Thus, we have $2\le |X|\le 4$.
The same argument, with $A$ and $B$ interchanged, applies to critical
sets for $uv$ contained in $B$. (i) holds.

(ii) Suppose that $X=\{u,a\}$ is a critical set in $A$. Then $|N(X)|=|X|+2=4$.
Since $d(u)=4$, we have $N(X)=N(u)$. Recall that $v\notin N(a)$.
Then $N(a)\subseteq N(u)\setminus\{v\}$.
It follows that $N(a)=N(u)\setminus\{v\}$ since $d(a)\ge 3$ and $|N(u)\setminus\{v\}|=3$.
Hence, we have $d(a)=3$ and $a$ is the required vertex. (ii) holds.

(iii) Suppose that $X\subseteq A$ is a critical set of size $4$.
By Lemma~\ref{lem:critical-set}, a dual critical set $Y\subseteq B$ of $X$ has size $7-|X|-1=2$.
Applying part~\ref{item:fourteen-critical-set-two} with $A$ and $B$
interchanged gives a cubic vertex $z\in B$ with
$N(z)=N(v)\setminus\{u\}$.  (iii) holds.
\end{proof}

\section{Proof of Theorem~\ref{thm:transversal}}\label{sec:transversal}

Let $D$ be a digraph and $v\in V(D)$. We denote by $d_D^-(v)$ and $d_D^+(v)$ the indegree and outdegree of $v$.
A digraph is \emph{strongly connected}, or \emph{strong}, if each ordered pair of vertices is joined
by a directed path.
A digraph is \emph{$2$-strongly connected}, or \emph{$2$-strong}, if it has at least three
vertices and remains strongly connected after deleting any vertex.
A $2$-strong digraph is \emph{minimal} if the digraph obtained by deleting any arc is not $2$-strong.
For a simple bipartite graph $H[A,B]$ with a perfect matching
$M=\{a_ib_i:1\le i\le h\}$, where $a_i\in A$ and $b_i\in B$, the \emph{associated digraph}
$D(H,M)$ has vertices $1,\ldots,h$ and contains an arc $ij$ for each
edge $a_ib_j$ not in $M$.

\begin{lemma}\label{lem:correspondence}
Let $H$ be a simple bipartite graph of order at least $6$ with a perfect matching $M$.
\begin{enumerate}[label=(\roman*),nosep]
\item\label{item:correspondence-equivalence}(\cite{RST1999})
The graph $H$ is $2$-extendable if and only if $D(H,M)$ is $2$-strong;
\item\label{item:correspondence-minimal}(\cite{ZL})
If $H$ is minimal $2$-extendable, then $D(H,M)$ is
minimal $2$-strong.
\end{enumerate}
\end{lemma}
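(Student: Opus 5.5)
The plan is to translate the neighbourhood condition of Lemma~\ref{lem:brace}\ref{item:brace-neighbourhood} directly into an out-neighbourhood condition in $D=D(H,M)$. Write $M=\{a_ib_i:1\le i\le h\}$; since $|V(H)|\ge6$ we have $h\ge3$. For $S\subseteq\{1,\ldots,h\}$ put $A_S=\{a_i:i\in S\}$ and $N^+_D(S)=\{j: ij\in A(D)\text{ for some } i\in S\}$. Because $a_ib_i\in M$ and every other edge $a_ib_j$ is the arc $ij$, we get
\[
N_H(A_S)=\{b_i:i\in S\}\cup\{b_j:j\in N^+_D(S)\setminus S\},\qquad\text{so}\qquad |N_H(A_S)|=|S|+|N^+_D(S)\setminus S|.
\]
Every $X\subseteq A$ has the form $A_S$. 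Hence the condition $|N_H(X)|\ge|X|+2$ for all $1\le|X|\le h-2$ is equivalent to
\[
(\ast)\qquad |N^+_D(S)\setminus S|\ge2 \quad\text{for every } S \text{ with } 1\le|S|\le h-2.
\]

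For part (i), I would first check that $(\ast)$ characterizes $2$-strong digraphs on $h\ge3$ vertices. Suppose $D-v$ is not strong for some $v$. Then there is a nonempty $S\subseteq V(D)\setminus\{v\}$ with no arc from $S$ into $V(D)\setminus(S\cup\{v\})$, and the latter set is nonempty. Thus $|S|\le h-2$ and $N^+_D(S)\setminus S\subseteq\{v\}$, so $(\ast)$ fails. Conversely, suppose $S$ violates $(\ast)$. Let $v$ be the unique vertex of $N^+_D(S)\setminus S$ if it exists; otherwise let $v$ be any vertex outside $S$. Since $|S|\le h-2$, some vertex lies outside $S\cup\{v\}$, and it is not reachable from $S$ in $D-v$, so $D-v$ is not strong.

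Next I would combine this with Lemma~\ref{lem:brace}. If $H$ is $2$-extendable, it is a brace, so $(\ast)$ holds and $D$ is $2$-strong. Conversely, if $D$ is $2$-strong, then $(\ast)$ holds. A strong $D$ also makes $H$ connected: each pair $a_i,b_i$ is joined by an edge of $M$, and the arcs of $D$ link all these pairs together. Lemma~\ref{lem:brace} then shows that $H$ is a brace, hence $2$-extendable.

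For part (ii), let $H$ be minimal $2$-extendable and let $ij$ be any arc of $D$; it comes from an edge $e=a_ib_j\notin M$. The graph $H-e$ is simple, bipartite, of the same order, still contains the perfect matching $M$, and satisfies $D(H-e,M)=D-ij$. By minimality $H-e$ is not $2$-extendable, so by part (i) $D-ij$ is not $2$-strong. Since $D$ itself is $2$-strong by part (i), $D$ is minimal $2$-strong.

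The only delicate step is the Menger-type equivalence between $2$-strong connectivity and $(\ast)$, in particular handling the boundary size $|S|=h-1$. That case is vacuous on both sides, because removing the single outside vertex leaves nothing to reach, so it causes no difficulty.
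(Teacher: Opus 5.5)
Your argument is correct. The paper gives no proof of this lemma: it imports part (i) and part (ii) from the cited references without argument. Your proof is therefore a genuinely different, self-contained route, resting only on Lemma~\ref{lem:brace}.

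The key step is the identity $|N_H(A_S)|=|S|+|N^+_D(S)\setminus S|$. It converts the Hall-type condition~\eqref{eq:hall} into the out-neighbourhood condition $(\ast)$. Your separation argument then correctly shows that $(\ast)$ is equivalent to $D-v$ being strong for every $v$. You were also right to recover connectivity of $H$ from that of $D$, since Lemma~\ref{lem:brace} presupposes connectivity.

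What this buys you is that (ii) becomes an immediate corollary of (i). Deleting an arc $ij$ of $D$ is exactly deleting the non-matching edge $a_ib_j$ from $H$ while keeping $M$. This also shows why the converse of (ii) cannot be expected: minimality of $D(H,M)$ says nothing about deleting edges of $M$. The citation approach is shorter, but yours makes the dependence transparent.

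One detail is worth stating explicitly. Under the paper's wording, $2$-strong also requires $D$ itself to be strong. This follows from $(\ast)$ too: since $h\ge3$, any two vertices $x,y$ are joined by a directed path in $D-v$ for any $v\notin\{x,y\}$.
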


\begin{lemma}[\cite{Mader}]\label{lem:mader}
Every minimal $2$-strong digraph $D$ has a vertex $v$ with $d_D^+(v)=d_D^-(v)=2$.
\end{lemma}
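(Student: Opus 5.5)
We would argue by contradiction, working with critical arcs and a minimal ``fragment'', in the spirit of Mader's original treatment of minimally $k$-connected digraphs. Let $D$ be a minimal $2$-strong digraph on $N\ge3$ vertices.

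\textbf{Step 1 (degree bounds).} Since $D$ is $2$-strong, deleting the (at most one) out-neighbour of a vertex $v$ would leave $v$ with no out-arcs. Hence $d^+_D(v)\ge2$ for every $v$, and symmetrically $d^-_D(v)\ge2$. So it suffices to rule out the situation in which every vertex has $d^+\ge3$ or $d^-\ge3$. Assume this situation holds.

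\textbf{Step 2 (fragments).} By minimality, for every arc $a=uv$ the digraph $D-a$ is not $2$-strong. Since $D-a$ is still strong (indeed $D-a-s$ is strong unless $s$ is special), there is a vertex $s_a\notin\{u,v\}$ such that $D-a-s_a$ is not strong. Let $Y$ be the vertex set of the terminal strong component of $D-a-s_a$ that does not contain $v$, or dually choose suitably, and then take complements. In this way we obtain a set $X\subseteq V(D)\setminus\{s_a\}$ with the following properties: $u\in X$, $v\notin X$, the complement $V(D)\setminus(X\cup\{s_a\})$ is nonempty, and the only arcs leaving $X$ are $a$ and arcs ending at $s_a$. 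We call $X$ an \emph{out-fragment} of $a$, so that in $D$ the out-neighbourhood of $X$ outside $X$ is exactly $\{v,s_a\}$. Reversing all arcs gives the dual notion of an \emph{in-fragment}; its complement (minus $s_a$) is an in-fragment containing $v$.

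Among all arcs and all out- and in-fragments, we choose a fragment $X$ of minimum cardinality. By symmetry we may assume it is an out-fragment of $a=uv$ with separator $s$.

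\textbf{Step 3 (uncrossing, the main obstacle).} We aim to show that $X=\{u\}$ and that $u$ then has $d^+(u)=2$, namely $N^+(u)=\{v,s\}$. Since we assumed every vertex has $d^+\ge3$ or $d^-\ge3$, this forces $d^-(u)\ge3$.

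The key tool is submodularity of $|N^+(\cdot)|$ on vertex sets, combined with the fact that $D$ is $2$-strong. This gives $|N^+(Z)\setminus Z|\ge2$ for every $Z$ whose complement has at least two vertices outside $N^+(Z)$.

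If $|X|\ge2$, we pick an arc $b$ inside $X$ or leaving $X$ other than $a$, take a fragment $X'$ of $b$, and intersect: $X\cap X'$ or $X\setminus(X'\cup\{s_b\})$. Submodularity should then show that one of these is a strictly smaller fragment, contradicting minimality.

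The delicate part is that the neighbourhood bound for fragments has an ``exception arc''. This extra term breaks exact submodularity, so we must do careful case analysis according to whether $s_b\in X$, $s\in X'$, and whether the arcs $a,b$ cross the other set. We expect this to be the longest step. We will try first the choice $b=$ an arc from $u$ into $X$, or else an arc into $u$ from $X$.

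\textbf{Step 4 (finishing).} Once $X=\{u\}$ with $d^+(u)=2$, we consider the in-arcs of $u$, all of which are critical. Applying the same minimality to an in-fragment of some arc $wu$, the minimum-size hypothesis forces the in-fragment to be $\{u\}$ as well. This yields $d^-(u)=2$, contradicting $d^-(u)\ge3$.

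To make this last step work, we would choose the minimum over out- and in-fragments jointly, which the setup in Step 2 already does, and reuse the uncrossing of Step 3 with arcs reversed. Hence some vertex has $d^+_D(v)=d^-_D(v)=2$.
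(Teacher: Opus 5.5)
\paragraph{Gap in Step~4.}
The paper does not prove this lemma; it only cites Mader. So your sketch has to stand on its own, and it does not. The decisive gap is Step~4. Choosing $X$ of minimum cardinality over all fragments only tells you that every fragment has at least $|X|=1$ vertex, which is vacuous. It gives no information about the in-fragments of the particular arcs entering $u$. An in-fragment $\{u\}$ for an arc $wu$ means exactly $N^-(u)=\{w,s'\}$, i.e.\ $d^-(u)=2$, so Step~4 assumes what it is supposed to prove.

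The implication ``singleton out-fragment at $u$ forces a singleton in-fragment at $u$'' is in fact false for minimal $2$-strong digraphs. Take $H=Q_{12}$ and $M=\{p_1y_1,p_2y_2,x_1q_1,x_2q_2,x_3y_3,x_4y_4\}$. Since $Q_{12}$ is a minimal brace (Lemma~\ref{lem:small}), $D(H,M)$ is minimal $2$-strong by Lemma~\ref{lem:correspondence}. The vertex $u$ of $D(H,M)$ coming from $x_1q_1$ has $d^+(u)=d_H(x_1)-1=2$ but $d^-(u)=d_H(q_1)-1=3$. Thus $\{u\}$ is an out-fragment of the smallest possible size, yet no arc entering $u$ has $\{u\}$ as an in-fragment. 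Knowing that a vertex has out-degree $2$ tells you nothing about its in-degree. The entire content of the lemma is to find a vertex where both happen at once, and your argument has no mechanism for that.

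\paragraph{Step~3 and the missing idea.}
Step~3 is also left as a hope (``submodularity should then show\ldots''), and it is not routine. As you note yourself, the exceptional arc and the separating vertex spoil submodularity. The case analysis you postpone is the technical core, even for the weaker goal of finding a single vertex of out-degree or in-degree $2$.

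Nothing in Steps~2--4 uses that the connectivity is $2$ rather than $k$. An argument this soft would therefore prove the $k$-strong analogue, which is a well-known and much harder conjecture of Mader. That is a further sign that the real difficulty has been skipped.

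What is missing is an argument that controls out-fragments and in-fragments at the same vertex simultaneously. A single global minimum-cardinality choice cannot do this, as the $Q_{12}$ example shows.
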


\begin{proof}[Proof of Theorem~\ref{thm:transversal}]
Suppose that $G[A,B]$ is a minimal brace of order $n\ge6$.
Recall that every minimal brace is simple. Then $G$ is simple.
Let $M=\{a_i b_i:1\le i\le n/2\}$ be any perfect matching
of $G$, where $a_i\in A$ and $b_i\in B$.
By Lemma~\ref{lem:brace-3-connected}, $G$ is $3$-connected,
and so $\delta(G)\ge3$.
Since $G$ is a minimal brace,
Lemma~\ref{lem:brace}\ref{item:brace-extendable} implies
that $G$ is minimal $2$-extendable.
Then, by
Lemma~\ref{lem:correspondence}\ref{item:correspondence-minimal},
$D(G,M)$ is minimal $2$-strong.
Lemma~\ref{lem:mader} implies that $D(G,M)$ has a vertex $i$
whose indegree and outdegree are both $2$.
Writing $D=D(G,M)$, by the definition of $D$, we have
$d_G(a_i)=d_D^+(i)+1$ and $d_G(b_i)=d_D^-(i)+1$.
Hence, $a_i b_i\in M$ is a $3$-line of $G$.
This holds for every perfect matching of $G$.

Since $\delta(G)\ge3$, choose
$u\in N_G(a_i)\setminus\{b_i\}$.
Then $u\in B$.
Since $G$ is matching covered, it has a perfect matching
$M_1$ containing $a_i u$.
Clearly, $a_i b_i\notin M_1$.
By the argument above, $M_1$ contains a $3$-line
$xy\ne a_i b_i$, where $x\in A$ and $y\in B$.
We show that there exists a perfect matching of $G$
which avoids both $a_i b_i$ and $xy$, which contains
a third $3$-line of $G$. Thus, $G$ has at least three $3$-lines.

If $xy=a_i u$, choose
$w\in N_G(a_i)\setminus\{b_i,u\}$ using $\delta(G)\ge3$.
Since $G$ is matching covered, it has a perfect matching of $G$
containing $a_i w$.
This perfect matching avoids both $a_i b_i$ and $xy$.

Now suppose that $xy\ne a_i u$.
Since $xy,a_i u\in M_1$, these two edges are independent in $G$.
In particular, $x\ne a_i$ and $y\ne u$.
If $xy$ and $a_i b_i$ are adjacent, then $y=b_i$.
Since $\delta(G)\ge3$, choose
$w'\in N_G(y)\setminus\{x,a_i\}$.
Since $G$ is matching covered, it has a perfect matching of $G$
containing $yw'$, which avoids both $a_i b_i$ and $xy$.
If $xy$ and $a_i b_i$ are independent, choose
$z\in N_G(y)\setminus\{a_i,x\}$ as $\delta(G)\ge3$.
Since $y\ne u$ and $z\ne a_i$, the edges $a_i u$ and $yz$
are independent in $G$.
Since $G$ is $2$-extendable,
they extend to a perfect matching of $G$.
This matching avoids $a_i b_i$ because it contains $a_i u$,
and avoids $xy$ because it contains $yz$. The proof is complete.
\end{proof}

\section{Proof of Theorem~\ref{thm:main}}\label{sec:main-proof}
Throughout this section,
suppose $G[A,B]$ is a minimal brace of order $n\ge6$ and size $m$.
Then, by Lemma \ref{lem:brace-3-connected}, $G$ is 3-connected and so $\delta(G)\ge 3$.
Recall that $G$ is simple.
Let $t=|E(G[V_3])|$, the number of $3$-lines of $G$, and let $c$
be the number of components of $G[V_{\ge4}]$.
Note that $V_{\geq4}(G)$ is the set of noncubic vertices in $G$.
When there are no noncubic vertices in $G$, set $c=0$.
The total degree excess above four at the noncubic vertices is
$\rho=\sum_{v\in V_{\ge4}}\bigl(d_G(v)-4\bigr)$.

\subsection{Bounds on the number of cubic vertices}\label{sec:count}

\begin{lemma}\label{lem:counts}
Let $G$, $t,c,$ and $\rho$ be as defined above. Then
\begin{align}
 5n_3(G)&=2n+2t+2c+\rho,\label{eq:identity-order}\\
 2n_3(G)&=m-n+t+c.\label{eq:identity-size}
\end{align}
Consequently,
\begin{equation}\label{eq:linear}
 n_3(G)\ge\left\lceil\frac{2n+8}{5}\right\rceil,
 \
 n_3(G)\ge\left\lceil\frac{m-n+4}{2}\right\rceil.
\end{equation}
\end{lemma}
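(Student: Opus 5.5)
The plan is to obtain both identities by double counting edges, splitting them according to whether their ends are cubic. Write $n_{\ge4}=|V_{\ge4}|=n-n_3(G)$. By Lemma~\ref{lem:forest}, $G[V_{\ge4}]$ is a forest with $c$ components, so it has exactly $n_{\ge4}-c$ edges; this also holds trivially when $V_{\ge4}=\emptyset$ and $c=0$. The edges of $G[V_3]$ are precisely the $3$-lines, so there are $t$ of them. Summing degrees over $V_3$ counts each edge of $G[V_3]$ twice and each edge of $E(V_3,V_{\ge4})$ once. Hence $|E(V_3,V_{\ge4})|=3n_3(G)-2t$. Adding the three edge classes gives
\[
m=t+\bigl(3n_3(G)-2t\bigr)+\bigl(n-n_3(G)-c\bigr)=2n_3(G)+n-t-c,
\]
which is \eqref{eq:identity-size}.

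For \eqref{eq:identity-order}, I would use the handshake identity
\[
2m=3n_3(G)+4\bigl(n-n_3(G)\bigr)+\rho=4n-n_3(G)+\rho,
\]
where $\rho$ absorbs the degree excess above four. Equating this with twice \eqref{eq:identity-size}, namely $2m=4n_3(G)+2n-2t-2c$, gives $5n_3(G)=2n+2t+2c+\rho$ directly.

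For \eqref{eq:linear}, I would use Theorem~\ref{thm:transversal}, which gives $t\ge3$, together with $\rho\ge0$, and split into two cases.
\begin{itemize}
\item If $G$ has a noncubic vertex, then $c\ge1$. So $2t+2c+\rho\ge8$ and $t+c\ge4$. The two identities then give $5n_3(G)\ge 2n+8$ and $2n_3(G)\ge m-n+4$, and the ceilings follow because $n_3(G)$ is an integer.
\item If $G$ is cubic, then $c=0$ and the identities alone are too weak, but the bounds can be checked directly. Here $n_3(G)=n$ and $m=3n/2$. The first bound reduces to $5n\ge2n+8$, and the second to $n\ge n/4+2$; both hold since $n\ge6$.
\end{itemize}

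The only step needing care is this case distinction, since the identities alone do not yield the bound when $c=0$. Everything else is routine bookkeeping once Lemma~\ref{lem:forest} and Theorem~\ref{thm:transversal} are in place.
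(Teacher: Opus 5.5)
Your proposal is correct and follows essentially the same route as the paper: you classify edges by the degrees of their ends and use that $G[V_{\ge4}]$ is a forest with $c$ components (Lemma~\ref{lem:forest}), then you invoke $t\ge3$ from Theorem~\ref{thm:transversal}, with a separate direct check when $G$ is cubic. The only cosmetic difference is that you derive \eqref{eq:identity-order} from the global handshake identity combined with \eqref{eq:identity-size}, whereas the paper sums degrees over $V_{\ge4}$ alone; this is equivalent bookkeeping.
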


\begin{proof}
Write $r=|V_{\ge4}|=n-n_3(G)$ and $x=|E(V_3,V_{\ge4})|$.
By Lemma~\ref{lem:forest}, $G[V_{\ge4}]$ is a forest, which implies that it has $r-c$ edges.
Note that
\[
 3n_3(G)=2t+x,\ 4r+\rho=2(r-c)+x,\ m=t+x+r-c.
\]
The first equality gives $x=3n_3(G)-2t$. Substituting this into
the last two  equalities and  using $r=n-n_3(G)$, we obtain $2(n-n_3(G))+\rho=3n_3(G)-2t-2c$,
which is~\eqref{eq:identity-order}, and
$m=t+3n_3(G)-2t+n-n_3(G)-c=n+2n_3(G)-t-c$,
which is~\eqref{eq:identity-size}.

If $V_{\ge4}=\emptyset$, then $n_3(G)=n$ and $m=3n/2$.
Since $n\ge6$, we have $5n\ge2n+8$ and $2n\ge n/2+4$,
which give both inequalities in~\eqref{eq:linear}.
Now assume that $V_{\ge4}\ne\emptyset$. Then $c\ge1$ and $\rho\ge0$.
By Theorem~\ref{thm:transversal}, we have $t\ge3$.
Combining these with~\eqref{eq:identity-order} and~\eqref{eq:identity-size}
gives~\eqref{eq:linear}.
\end{proof}

\begin{proof}[Proof of Theorem~\ref{thm:main}\ref{item:count}]
By Lemma~\ref{lem:counts}, we have $n_3(G)\ge\left\lceil\frac{2n+8}{5}\right\rceil$.
Suppose that $n_3(G)\le7$. Then $2n+8\le5n_3(G)\le35$.
Since $n$ is even, we have $n\le12$.
By Lemma~\ref{lem:small}\ref{item:small-six-eight}--\ref{item:small-ten}, $G$ is $K_{3,3}$ when $n=6$, and $n_3(G)\ge 8$ when $n=8$ or $10$.
If $n=12$, then $2m\ge3n_3(G)+4(12-n_3(G))=48-n_3(G)\ge41$, and so $m\ge21$.
This contradicts Lemma~\ref{lem:small}\ref{item:small-twelve}.
Thus $G$ is $K_{3,3}$, which has exactly six cubic vertices. This completes the proof of Theorem~\ref{thm:main}\ref{item:count}.
\end{proof}

\subsection{Reduction to two orders}
We now proceed with the proof of Theorem \ref{thm:main}\ref{item:eight}.
Assume further that $G$ has exactly eight cubic vertices, i.e., $n_3(G)=8$.
The following parameters follow directly from the constructions.

\begin{proposition}\label{prop:parameters}
The four graphs $B_8, B_{10}, Q_{10}^{+}$ and $Q_{12}$ in Figure~\ref{fig:eight-cubic} have the following parameters:
\begin{center}
\normalfont
\setlength{\tabcolsep}{12pt}
\renewcommand{\arraystretch}{1.1}
\begin{tabular}{@{}ccccc@{}}
\toprule
Graph & $n$ & $m$ & $n_3(G)$ & $t$ \\
\midrule
$B_8$       & 8  & 12 & 8 & 12 \\
$B_{10}$    & 10 & 16 & 8 & 8  \\
$Q_{10}^{+}$& 10 & 16 & 8 & 9  \\
$Q_{12}$    & 12 & 20 & 8 & 4  \\
\bottomrule
\end{tabular}
\end{center}
\end{proposition}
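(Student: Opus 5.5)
The proposition is a direct verification from the explicit constructions, so the plan is to compute $n$, $m$, the degree sequence, and then $t=|E(G[V_3])|$ for each of the four graphs in turn. Throughout, I will use the identity \eqref{eq:identity-size}, $2n_3(G)=m-n+t+c$, as a consistency check on the count of $3$-lines.

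\textbf{The cube $B_8$.} It is cubic of order $8$. Hence $m=12$, $n_3=8$, and every edge is a $3$-line, so $t=12$.

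\textbf{The biwheel $B_{10}$.} The rim is an $8$-cycle and there are two hubs, each adjacent to the four rim vertices of the opposite colour. This gives $n=10$ and $m=8+4+4=16$. Every rim vertex has degree $2+1=3$ and each hub has degree $4$, so $n_3=8$. The $3$-lines are exactly the rim edges, so $t=8$. As a check, $c=2$ because the hubs are nonadjacent, and $2\cdot8=16-10+8+2$.

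\textbf{The graph $Q_{12}$ (the case $\ell=4$ of $Q_{2\ell+4}$).} Here $n=2\ell+4=12$, and the edges $p_jy_i$, $x_iq_j$, $x_iy_i$ number $2\ell+2\ell+\ell=5\ell=20$.
\begin{itemize}
\item Each $x_i$ has neighbours $q_1,q_2,y_i$, and each $y_i$ has neighbours $p_1,p_2,x_i$, so all eight of these vertices are cubic.
\item The vertices $p_j$ and $q_j$ have degree $\ell=4$.
\end{itemize}
Thus $n_3=8$. The $3$-lines are the edges $x_iy_i$, so $t=4$. As a check, $c=4$ because the $p$'s and $q$'s are pairwise nonadjacent, and $16=20-12+4+4$.

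\textbf{The graph $Q_{10}^{+}$.} It is $Q_{10}$ ($\ell=3$) plus the edge $p_1q_1$. So $n=10$ and $m=15+1=16$. The degrees are:
\begin{itemize}
\item $x_i,y_i$: degree $3$;
\item $p_2,q_2$: degree $3$;
\item $p_1,q_1$: degree $4$.
\end{itemize}
Hence $n_3=8$. The $3$-lines are the three edges $x_iy_i$, the three edges $p_2y_i$, and the three edges $x_iq_2$, giving $t=9$. As a check, $c=1$ and $16=16-10+9+1$.

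The only potential obstacle is making sure that $Q_{10}^{+}$ matches Figure~\ref{fig:eight-cubic}, and in particular that the added edge $p_1q_1$ raises only those two vertices to degree $4$. That follows directly from the definition.
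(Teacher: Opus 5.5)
Your proposal is correct and is the same approach as the paper, which simply states that these parameters follow directly from the constructions; your edge and degree counts carry out that verification accurately, including the point that $Q_{10}$ is cubic, so adding $p_1q_1$ produces exactly two vertices of degree $4$ and leaves nine $3$-lines. The consistency checks via $2n_3(G)=m-n+t+c$ are a harmless extra and are not logically needed.
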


\begin{lemma}\label{lem:eight-reduction}
Either $G$ is isomorphic to one of $B_8, B_{10}, Q_{10}^{+}$ and $Q_{12}$, or one of the
following holds.
\begin{enumerate}[label=(\roman*),nosep]
\item\label{item:reduction-fourteen}
$n=14$, each colour class of $G$ consists of four cubic vertices and  three vertices of degree $4$, $m=24$, and $t+c=6$;
\item\label{item:reduction-sixteen}
$n=16$, each colour class of $G$ consists of four cubic vertices and  four vertices of degree $4$, $t=3$, and $G[V_4]$ is a tree.
\end{enumerate}
\end{lemma}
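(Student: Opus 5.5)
With $n_3(G)=8$, I would plug this into the two identities of Lemma~\ref{lem:counts}. From \eqref{eq:identity-order} we get $40=2n+2t+2c+\rho$. Since $t\ge3$ by Theorem~\ref{thm:transversal}, and $c\ge1$, $\rho\ge0$ whenever $V_{\ge4}\neq\emptyset$, this gives $2n\le 40-8=32$, so $n\le16$. (If $V_{\ge4}=\emptyset$ then $n=8$.) The small orders are settled directly by Lemma~\ref{lem:small}. For $n=6$ the only minimal brace is $K_{3,3}$, which has six cubic vertices. For $n=8$ the only one is $B_8$. For $n=10$ the only candidates are $B_{10}$ and $Q_{10}^{+}$, since $M_{10}$ has ten cubic vertices.

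For $n=12$, there are four noncubic vertices, each of degree at least $4$, so $2m\ge 24+16=40$ and $m\ge20$. Lemma~\ref{lem:small}\ref{item:small-twelve} then forces $G\in\{B_{12},Q_{12}\}$. Since $B_{12}$ has ten cubic vertices, $G\cong Q_{12}$.

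For $n=14$, there are six noncubic vertices, so $2m\ge24+24=48$ and $m\ge24$. Lemma~\ref{lem:size} gives $m\le 25$, with equality only for $Q_{14}$. But $Q_{14}$ has the six cubic vertices $x_i,y_i$ together with the four vertices $p_j,q_j$ of degree $5$, so $n_3(Q_{14})=6\ne8$. Hence $m=24$, every noncubic vertex has degree exactly $4$, and $\rho=0$. The identity \eqref{eq:identity-order} then gives $40=28+2t+2c$, that is, $t+c=6$. To get the split by colour classes, let $A$ contain $\alpha$ cubic vertices. Counting edges from $A$ gives $3\alpha+4(7-\alpha)=m=24$, so $\alpha=4$, and the same count works for $B$.

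For $n=16$, \eqref{eq:identity-order} gives $8=2t+2c+\rho$ with $t\ge3$ and $c\ge1$. This forces $t=3$, $c=1$ and $\rho=0$. Thus all eight noncubic vertices have degree $4$, and $G[V_4]$ has one component; it is a forest by Lemma~\ref{lem:forest}, hence a tree. The same counting as before ($3\alpha+4(8-\alpha)=3\beta+4(8-\beta)$ with $\alpha+\beta=8$) gives four cubic vertices in each class.

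The argument is essentially bookkeeping. The only step needing care is $n=14$, where one must rule out the equality case $Q_{14}$ of Lemma~\ref{lem:size}, and that is done by the direct degree count above.
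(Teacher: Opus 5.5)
Your proposal is correct and is essentially the paper's own argument: $n\le16$ from the counting identities of Lemma~\ref{lem:counts} together with $t\ge3$ (Theorem~\ref{thm:transversal}), the small orders settled by Lemma~\ref{lem:small}, $n=14$ by Lemma~\ref{lem:size}, and the degree/colour-class bookkeeping for $n=14$ and $n=16$. One small slip: $Q_{14}$ has $\ell=5$, so it has ten cubic vertices $x_1,\dots,x_5,y_1,\dots,y_5$ rather than six, but since $10\ne8$ your exclusion of the equality case $m=25$ still goes through.
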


\begin{proof}
Since $n_3(G)=8$, we have $n\ge8$.
By Lemma~\ref{lem:counts}, we have $n_3(G)\ge\left\lceil\frac{2n+8}{5}\right\rceil$
and $n_3(G)\ge\left\lceil\frac{m-n+4}{2}\right\rceil$.
The first inequality, together with $n_3(G)=8$, gives $n\le16$.
Recall that $G$ is a minimal brace.
If $n=8$, Lemma \ref{lem:small}\ref{item:small-six-eight} implies that $G$ is isomorphic to $B_8$.
If $n=10$,  Lemma~\ref{lem:small}\ref{item:small-ten} implies that $G$ is isomorphic to one of $ B_{10}$ and $Q_{10}^{+}$.
If $n=12$, then $m\ge \frac{8\times3+4\times4}{2}=20$.
By Lemma~\ref{lem:small}\ref{item:small-twelve}, $G$ is isomorphic to $Q_{12}$.

Suppose first that $n=14$. Then $m=\frac{8\times3+6\times4+\rho}{2}=24+\rho/2$.
By Lemma~\ref{lem:size}, we have $m\le25$, with equality if and only if $G\cong Q_{14}$.
However, $Q_{14}$ has exactly ten cubic vertices.
It follows that $m=24$ and so $\rho=0$.
Thus, every noncubic vertex of $G$ has degree exactly $4$.
Since $m=24$, by a simple calculation, we can derive that each of $A$ and $B$
consists of four cubic vertices and  three vertices of degree $4$.
Using equation~\eqref{eq:identity-size} of Lemma~\ref{lem:counts}, we have $2n_3(G)=m-n+t+c$, which implies that $t+c=6$.

Suppose now that $n=16$. Then using the equation~\eqref{eq:identity-order} of
Lemma~\ref{lem:counts}, we have $5n_3(G)=2n+2t+2c+\rho$. So $2t+2c+\rho=8$.
By Theorem~\ref{thm:transversal}, $G$ has at least three $3$-lines, i.e., $t\ge3$.
Since $n=16$ and $n_3(G)=8$, we have $V_{\ge4}\ne\emptyset$, which implies that
$c\ge1$ and $\rho\ge0$.
Thus, we have $t=3$, $c=1$, and $\rho=0$.
The equality $\rho=0$ implies that every noncubic vertex of $G$ has degree exactly $4$.
Thus $m=\frac{8\times3+8\times4}{2}=28$.
By a simple calculation, we can derive that each of $A$ and $B$
consists of four cubic vertices and four vertices of degree $4$.
By Lemma~\ref{lem:forest}, $G[V_4]$ is a forest.
Combining this with $c=1$, we conclude that $G[V_4]$ is a tree.
\end{proof}

\begin{lemma}\label{lem:three-shapes}
If a minimal brace $H$ of order at least $6$ has exactly three $3$-lines, then the graph obtained by deleting the
isolated vertices of $H[V_3(H)]$ is either $K_{1,3}$ or $3K_2$.
\end{lemma}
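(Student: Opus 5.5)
Let $L$ be the graph obtained from $H[V_3(H)]$ by deleting its isolated vertices, so $L$ has exactly three edges. The plan is to use Theorem~\ref{thm:transversal} and exploit the freedom to choose perfect matchings. Up to isomorphism, the simple graphs with three edges are $K_3$, $K_{1,3}$, $P_4$, $P_3\cup K_2$ and $3K_2$. Since $H$ is bipartite, $K_3$ is impossible. So it suffices to rule out $P_4$ and $P_3\cup K_2$. In both of these $L$ has a vertex of degree exactly $2$ in $L$ and also an edge meeting it. Each configuration will be excluded by exhibiting a perfect matching of $H$ that avoids all three $3$-lines, contradicting Theorem~\ref{thm:transversal}.

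\textbf{Case $L\cong P_3\cup K_2$.} Write the $3$-lines as $xy$, $yz$ and $uw$, where $uw$ is disjoint from $\{x,y,z\}$. Since $y$ is cubic, it has a third neighbour $s\notin\{x,z\}$. I take $ys$ and would like to pair it with an edge at $u$ other than $uw$. The vertex $u$ is cubic, so it has two neighbours other than $w$. At most one of them can be $y$ (possible only if $u$ and $y$ lie in different colour classes), so I choose a neighbour $u'\ne w,y$ of $u$. If $u$ and $y$ lie in the same colour class, then $ys$ and $uu'$ are independent unless $s=u'$. Since $u$ has two choices, I pick $u'\neq s$ unless both remaining neighbours equal $s$, which is impossible in a simple graph. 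If they lie in different classes, independence is automatic once $u'\ne y$ and $s\ne u$, and $s\ne u$ holds because $yu$ is not a $3$-line. By 2-extendability (Lemma~\ref{lem:brace}\ref{item:brace-extendable}), $\{ys,uu'\}$ extends to a perfect matching. This matching avoids $xy$ and $yz$ because it covers $y$ by $ys$, and avoids $uw$ because it covers $u$ by $uu'$. This contradicts Theorem~\ref{thm:transversal}.

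\textbf{Case $L\cong P_4$.} Write the path as $x y z w$, with $3$-lines $xy$, $yz$ and $zw$. The cubic vertex $y$ has a third neighbour $s\neq x,z$, and $z$ has a third neighbour $s'\ne y,w$. The vertices $y$ and $z$ lie in opposite classes. The edges $ys$ and $zs'$ are independent, since $s\ne z$, $s'\ne y$, and $s=s'$ is impossible by bipartiteness. Extending $\{ys,zs'\}$ by 2-extendability gives a perfect matching avoiding all three $3$-lines, again a contradiction.

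Hence $L$ is $K_{1,3}$ or $3K_2$. I expect the main care point to be the independence checks in the $P_3\cup K_2$ case, in particular the possible adjacency between $u$ and $y$. As noted, this adjacency is controlled because any such edge would be a fourth $3$-line. The application of 2-extendability needs order at least $6$, which is assumed, so beyond these checks the argument should be routine.
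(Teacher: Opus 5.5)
Your proof is correct and follows essentially the paper's argument: in both, two independent edges are chosen so that any perfect matching containing them misses all three $3$-lines, they are extended by 2-extendability, and this contradicts Theorem~\ref{thm:transversal}. The only difference is that the paper skips your enumeration of three-edge graphs and treats every non-matching configuration at once. If $va,vb$ are $3$-lines and $xy$ is the third one with $x\neq v$ in the colour class of $v$, it extends $\{vw,xz\}$, where $w$ is the third neighbour of $v$ and $z\in N(x)\setminus\{y,w\}$, so independence is automatic and no separate $P_4$ or $P_3\cup K_2$ checks are needed.
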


\begin{proof}
If the three $3$-lines are pairwise disjoint, then deleting the
isolated vertices of $H[V_3(H)]$ yields $3K_2$, as required.
So we may assume that $va,vb$ are two $3$-lines of $H$.
Let $xy$ be the third $3$-line of $H$ such that $x$ and $v$ lie in the same colour class of $H$.
Then $\{v,a,b,x,y\}\subseteq V_3(H)$.

Suppose that $x\ne v$.
Since $d_H(v)=3$, let $N_H(v)=\{a,b,w\}$.
As $d_H(x)=3$, choose $z\in N_H(x)\setminus\{y,w\}$.
Note that the edges $vw$ and $xz$ are independent.
By Lemma~\ref{lem:brace}\ref{item:brace-extendable}, $H$ is $2$-extendable, which implies that
there is a perfect matching $M$ of $H$ containing both $vw$ and $xz$.
Note that $va,vb,xy\notin M$.
Then $M$ contains no $3$-line, contradicting Theorem~\ref{thm:transversal} which states that every perfect matching of $H$ contains a $3$-line. Therefore $x=v$, and deleting the isolated vertices of $H[V_3(H)]$ yields $K_{1,3}$. The result holds.
\end{proof}

\subsection{Excluding order sixteen}

\begin{lemma}\label{lem:sixteen}
$n\neq 16$.
\end{lemma}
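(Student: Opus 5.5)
Suppose for a contradiction that $n=16$. By Lemma~\ref{lem:eight-reduction}\ref{item:reduction-sixteen}, each colour class consists of four cubic and four degree-$4$ vertices, $t=3$, $m=28$, and $G[V_4]$ is a spanning tree $T$ on the eight vertices of degree $4$. The tree $T$ has $7$ edges, so $|E(V_3,V_4)|=32-14=18$. This agrees with $3\cdot8-2t=18$. By Lemma~\ref{lem:three-shapes}, the three $3$-lines form either a $K_{1,3}$ or a $3K_2$, and the remaining cubic vertices are isolated in $G[V_3]$.

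The plan is to contradict this rigid structure by exhibiting a perfect matching with no $3$-line, which violates Theorem~\ref{thm:transversal}.

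In the $K_{1,3}$ case, let the centre be $v\in A$, with neighbours $b_1,b_2,b_3\in B$. The other three cubic vertices of $A$ are then isolated in $G[V_3]$; hence all their neighbours lie in $V_4\cap B$. Likewise, the cubic vertex $b_4\in B\setminus\{b_1,b_2,b_3\}$ has all its neighbours in $V_4\cap A$. Since $N(v)\subseteq V_3$, any perfect matching covers $v$ by a $3$-line, so we need a different contradiction. Take $X=A\setminus\{v\}$. I would instead apply Lemma~\ref{lem:brace}\ref{item:brace-neighbourhood} to $X=$ the three isolated cubic vertices of $A$. Their neighbourhoods lie in the four degree-$4$ vertices of $B$, so $|N(X)|\le4<5$, which is the desired contradiction.

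In the $3K_2$ case, let the $3$-lines be $a_ib_i$ for $i=1,2,3$, and let $a_4\in A$ and $b_4\in B$ be the isolated cubic vertices, with $N(a_4)\subseteq V_4\cap B$ and $N(b_4)\subseteq V_4\cap A$. Write $A_4,B_4$ for the degree-$4$ vertices in $A$ and $B$. I would build a perfect matching avoiding all three $3$-lines: each $a_i$ ($i\le3$) is matched into $B_4$ or to $b_4$, and symmetrically. Each $a_i$ with $i\le 3$ has two neighbours other than $b_i$. Each of these lies either in $B_4$ or is some cubic $b_j$; the latter is impossible for $j\ne i$, since it would create a fourth $3$-line or a path, and it is impossible for $b_4$ since $b_4$ is isolated in $G[V_3]$. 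So $N(a_i)\setminus\{b_i\}\subseteq B_4$, and similarly $N(b_i)\setminus\{a_i\}\subseteq A_4$. Hence the matching splits: the vertices $a_1,a_2,a_3,a_4$ must be matched into $B_4$, and $b_1,\dots,b_4$ into $A_4$. Consider the bipartite graph $H_1$ between $\{a_1,a_2,a_3,a_4\}$ and $B_4$, using edges $a_ib$ with $b\in B_4$. There the degrees on the $a$-side are $2,2,2,3$, and each vertex of $B_4$ has degree at most $3$ in $H_1$, since it has $4$ neighbours and at least one of them lies in $A_4$ because $T$ is connected and spanning. Lemma~\ref{lem:four-matching} then gives a perfect matching of $H_1$, once we check that $H_1$ has no isolated vertices. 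The same holds for $H_2$ between $\{b_1,\dots,b_4\}$ and $A_4$. The union of these two matchings is a perfect matching of $G$ with no $3$-line, contradicting Theorem~\ref{thm:transversal}.

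The main obstacle is verifying that no vertex of $B_4$ is isolated in $H_1$, i.e.\ that no vertex of $B_4$ has all four neighbours in $A_4$. A sum count shows that $B_4$ receives exactly $4\cdot4-7=9$ edges from cubic vertices of $A$ (edges of $T$ incident with $B_4$ number $7$). If some $b\in B_4$ has no cubic neighbour, I would apply Lemma~\ref{lem:brace}\ref{item:brace-neighbourhood} to $X=\{a_1,\dots,a_4\}$: then $N(X)\subseteq B_4\setminus\{b\}$ together with $b_1,b_2,b_3$, giving $|N(X)|\le 6=|X|+2$. This is tight rather than contradictory, so I would finish with a minimality argument on an edge $bb'$ of $T$ via Lemma~\ref{lem:critical-set}, or with a further refinement of the counting. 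Handling this case is where I expect the real work to be.
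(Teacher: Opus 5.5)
Your star case, and your $3K_2$ case when neither $H_1$ nor $H_2$ has an isolated vertex, follow the paper's argument exactly. (Delete the abandoned aside about $X=A\setminus\{v\}$.) The genuine gap is the case you flag yourself: a vertex $b\in B_4$ with no cubic neighbour. Saying you would use ``a minimality argument on an edge $bb'$ of $T$'' is only an announcement, yet this case is the bulk of the proof. Moreover, the edge cannot be chosen arbitrarily. Here $N(b)=A_4\supseteq N(b_4)$. So for the unique $a'\in A_4\setminus N(b_4)$, the set $\{b,b_4\}$ satisfies $N(\{b,b_4\})=A_4$ and $N(a')\cap\{b,b_4\}=\{b\}$. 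It is therefore a critical set for $ba'$: $G-ba'$ really is not a brace, and deleting that edge yields no contradiction.

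Here is what is missing. First, extract the structure. $N(b)=A_4$ uses four of the seven edges of $T$, so the other three vertices of $B_4$ are leaves of $T$, each with exactly three neighbours among $a_1,\dots,a_4$. Hence $N(a_4)=B_4\setminus\{b\}$, each $a_i$ with $i\le3$ is adjacent to exactly two of these leaves, and any two leaves together are adjacent to all of $a_1,\dots,a_4$.

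Then delete $ba'$ with $a'\in N(b_4)$. Any critical set $Y\subseteq B$ for $ba'$ then satisfies $b\in Y$, $b_4\notin Y$ and $|Y|\le5$. Rule it out by cases on $|Y\cap B_4|$:
\begin{itemize}
\item If $Y$ contains two leaves, then $N(Y)=A$, forcing $|Y|=6$.
\item If $Y\cap B_4=\{b\}$, then each $b_i\in Y$ adds exactly its partner $a_i$, so $|N(Y)|=|Y|+3$.
\item If $Y$ contains exactly one leaf $w$, say missing $a_j$, then $|N(Y)|\ge7$ forces $|Y|=5$ and $a_j\notin N(Y)$. Hence $b_j\notin Y$, which leaves room for at most two cubic vertices in $Y$ instead of the three required.
\end{itemize}
Lemma~\ref{lem:critical-set} then shows that $G-ba'$ is a brace, contradicting minimality. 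This is precisely the second half of the paper's proof, with the colour classes swapped.
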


\begin{proof}
Suppose, to the contrary, that $n=16$.
By Lemma~\ref{lem:eight-reduction}\ref{item:reduction-sixteen},
each colour class of $G$ consists of four cubic vertices and  four vertices of degree $4$, $t=3$, and $G[V_4]$ is a tree.

If the three edges of $G[V_3]$ form a star, then the other three
cubic vertices in the center's colour class have at most four
neighbours, all in $V_4$. For this three-element set,
Lemma~\ref{lem:brace}\ref{item:brace-neighbourhood}
requires at least five neighbours, a contradiction.
By Lemma~\ref{lem:three-shapes}, deleting the isolated vertices
of $G[V_3]$  yields $3K_2$. We may label the cubic vertices such that
$V_3\cap A=\{a_0,a_1,a_2,a_3\}$, $V_3\cap B=\{b_0,b_1,b_2,b_3\}$, and $E(G[V_3])=\{a_ib_i:1\le i\le3\}$.

Consider the two balanced bipartite subgraphs $H_1=G[(V_3\cap A)\cup(V_4\cap B)]$ and $H_2=G[(V_3\cap B)\cup(V_4\cap A)]$.
Then $d_{H_1}(a_0)=d_{H_2}(b_0)=3$ and $d_{H_1}(a_i)=d_{H_2}(b_i)=2$, $1\le i\le3$.
Since $G[V_4]$ is a tree, for each $w\in V_4\cap B$, $d_{H_1}(w)=4-d_{G[V_4]}(w)\le3$;
likewise, for each $w'\in V_4\cap A$,
$d_{H_2}(w')=4-d_{G[V_4]}(w')\le3$.

If neither $H_1$ nor $H_2$ had an isolated vertex,
Lemma~\ref{lem:four-matching} would give perfect matchings $M_1$
and $M_2$ in $H_1$ and $H_2$, respectively.
Since $V(H_1)$ and $V(H_2)$ partition $V(G)$,
$M_1\cup M_2$ is a perfect matching of $G$.
Every edge of this perfect matching joins a cubic vertex to a noncubic
vertex in $G$, so it contains no $3$-line of $G$, contradicting Theorem~\ref{thm:transversal}.

Without loss of generality, assume that $H_2$ has an isolated vertex $u$.
Then  $u\in V_4\cap A$, $d_{G[V_4]}(u)=d_G(u)=4$ and $N_G(u)=V_4\cap B$.
Since $G[V_4]$ is a tree on eight vertices, it has seven edges.
Consequently, $\sum_{w'\in(V_4\cap A)\setminus\{u\}}d_{G[V_4]}(w')=7-4=3$.
It follows that each vertex of $(V_4\cap A)\setminus\{u\}$ is a leaf in $G[V_4]$,
and so has exactly three neighbours in $V_3\cap B$.
Since $N_G(b_0)\subseteq V_4\cap A$ and $ub_0\notin E(G)$, we have $N_G(b_0)=(V_4\cap A)\setminus\{u\}$.
For $1\le i\le3$, we have $N_G(b_i)\cap V_3=\{a_i\}$, $N_G(b_i)\setminus\{a_i\}\subseteq(V_4\cap A)\setminus\{u\}$.
Thus, each vertex of $(V_4\cap A)\setminus\{u\}$ is adjacent to $b_0$ and exactly two of
$b_1,b_2,b_3$, while each $b_i$, $1\le i\le3$, is adjacent
to exactly two vertices of $(V_4\cap A)\setminus\{u\}$ in $G$.
Consequently, $\bigl(N_G(w)\cup N_G(w')\bigr)\cap V_3=V_3\cap B$
for any distinct $w,w'\in(V_4\cap A)\setminus\{u\}$ as $G$ is simple.

Choose $v\in N_G(a_0)\subseteq V_4\cap B$.  Then $uv\in E(G)$.
We will show that there is no critical set contained in $A$ for $uv$ in $G$. Then, 
by Lemma~\ref{lem:critical-set}, $G-uv$ is a brace,
a contradiction  to the minimality of $G$. Therefore, we have $n\ne 16$.

Suppose, to the contrary, that $X\subseteq A$ is a critical set for $uv$ in $G$.
By  definition, we have
$$
u\in X,\ N_G(v)\cap X=\{u\},\
|N_G(X)|=|X|+2.
$$
Since $d_G(v)=4$ and $|A|=8$, we have $X\subseteq A\setminus\bigl(N_G(v)\setminus\{u\}\bigr)$
and $|X|\le5$.
Moreover, $a_0\notin X$ since $a_0\in N_G(v)\setminus\{u\}$.
We distinguish the following three cases according to $|X\cap V_4|$.

If $|X\cap V_4|\ge3$, then $X$ contains $u$ and at least
two vertices of $(V_4\cap A)\setminus\{u\}$. Since $N_G(u)=V_4\cap B$ and any
two vertices of $(V_4\cap A)\setminus\{u\}$ together have all of $V_3\cap B$ as neighbours,
we have $N_G(X)=B$ and $|X|=|N_G(X)|-2=6$, a contradiction  to $|X|\le5$.

If $|X\cap V_4|=1$, then $X\cap V_4=\{u\}$.
As $a_0\notin X$, every vertex of $X\setminus\{u\}$ is
some $a_i$ with $1\le i\le3$, and contributes exactly the
additional neighbour $b_i$ outside $N_G(u)$.
Therefore $N_G(X)=(V_4\cap B)\cup\{b_i:a_i\in X,\ 1\le i\le3\}$,
and hence $|N_G(X)|=4+(|X|-1)=|X|+3$,
a contradiction  to $|N_G(X)|=|X|+2$.

Finally, suppose that $|X\cap V_4|=2$.
Besides $u$, the set $X$ contains exactly one vertex of $(V_4\cap A)\setminus\{u\}$, which
misses $b_j$ for some $1\le j\le3$. Thus $(V_4\cap B)\cup\bigl((V_3\cap B)\setminus\{b_j\}\bigr)\subseteq N_G(X)$.
It follows that $7\le |N_G(X)|=|X|+2\le7$.
Hence $|X|=5$, $|N_G(X)|=7$, and $b_j\notin N_G(X)$.
Since $a_jb_j\in E(G)$, we have $a_j\notin X$.
Together with $a_0\notin X$, we have $X\cap V_3\subseteq(V_3\cap A)\setminus\{a_0,a_j\}$ and so
$|X\cap V_3|\le2$.
But $|X\cap V_3|=|X|-|X\cap V_4|=5-2=3$, a contradiction.
The proof is complete.
\end{proof}

\subsection{Excluding order fourteen}

We now prove that $n\neq 14$. To establish this conclusion, we first show that the graph $G$ cannot have the following two structures when $n=14$.
Recall that $t=|E(G[V_3])|$ is the number of $3$-lines of $G$, and  $c$ is the number of components of $G[V_{\ge4}]$.

\begin{lemma}\label{lem:fourteen-not}
If $n=14$, then neither of the following two statements can hold.
\begin{enumerate}[label=(\roman*),nosep]
\item\label{item:fourteen-not-three}
$|E(G[V_3])|=|E(G[V_4])|=3$, and
\item\label{item:fourteen-not-double}
$G[V_4]$ is a double star.
\end{enumerate}
\end{lemma}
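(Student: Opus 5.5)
The plan is to work throughout in the setting of Lemma~\ref{lem:eight-reduction}\ref{item:reduction-fourteen}. There, each colour class consists of four cubic vertices and three vertices of degree $4$, $m=24$, and $t+c=6$. By Lemma~\ref{lem:forest}, $G[V_4]$ is a forest on six vertices with $c$ components, so it has $6-c=t$ edges, which matches Lemma~\ref{lem:fourteen-critical-set}. The two tools, as in the proof of Lemma~\ref{lem:sixteen}, are: (a) a perfect matching with no $3$-line contradicts Theorem~\ref{thm:transversal}; (b) an edge $uv$ with $d(u)=d(v)=4$ and no critical set makes $G-uv$ a brace by Lemma~\ref{lem:critical-set}, contradicting minimality. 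For (b), Lemma~\ref{lem:fourteen-critical-set} restricts the possible critical sets to sizes $2,3,4$ and gives explicit cubic "twin" vertices in sizes $2$ and $4$.

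\emph{Case \ref{item:fourteen-not-three}.} Since $t=3$, Lemma~\ref{lem:three-shapes} says the $3$-lines form $K_{1,3}$ or $3K_2$.
\begin{itemize}[nosep]
\item If they form $K_{1,3}$, the three other cubic vertices in the centre's class have all their neighbours among the three degree-$4$ vertices of the opposite class. This violates $|N(X)|\ge|X|+2$ from Lemma~\ref{lem:brace}\ref{item:brace-neighbourhood}.
\item So the $3$-lines are $a_ib_i$ for $1\le i\le3$, and $a_0,b_0$ are isolated in $G[V_3]$. Then $N(a_0)=V_4\cap B$ and $N(b_0)=V_4\cap A$. Each $a_i$ and each $b_i$ has exactly two neighbours among the degree-$4$ vertices.
\item Also $c=3$, so $G[V_4]$ has three edges and three components. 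Edge counting then shows each degree-$4$ vertex has $4-d_{G[V_4]}(w)$ cubic neighbours.
\end{itemize}
I would next split by the shape of $G[V_4]$: $3K_2$, $P_4\cup 2K_1$, $P_3\cup K_2\cup K_1$, or $K_{1,3}\cup 2K_1$. In each shape, I would try to build a perfect matching avoiding $a_1b_1,a_2b_2,a_3b_3$. Match $a_0$ and $b_0$ into $V_4$, and use one edge of $G[V_4]$ to balance the sides. Then match the remaining $a_i,b_i$ into $V_4$ by a Hall check in the style of Lemma~\ref{lem:four-matching}.

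Where Hall fails, the failure forces two of the $a_i$ (or $b_i$) to have identical $V_4$-neighbourhoods. In that situation I would switch to tool (b). I would pick an edge $uv$ of $G[V_4]$ and use Lemma~\ref{lem:fourteen-critical-set}\ref{item:fourteen-critical-set-two}--\ref{item:fourteen-critical-set-ends}: a critical set of size $2$ or $4$ requires a cubic vertex with $N(w)=N(u)\setminus\{v\}$. Such a $w$ has only $V_4$ neighbours plus at most one cubic neighbour, and the structure above should rule it out. A critical set of size $3$ would be excluded by direct counting of $N(X)$, as in the three cases of Lemma~\ref{lem:sixteen}.

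\emph{Case \ref{item:fourteen-not-double}.} Here $c=1$ and $t=5$. A double star on three plus three vertices has centres $u\in A$ and $v\in B$. Then $u$ has two leaf neighbours in $V_4\cap B$, $v$ has two leaf neighbours in $V_4\cap A$, and $d_{G[V_4]}(u)=d_{G[V_4]}(v)=3$. So $u$ and $v$ each have exactly one cubic neighbour, while each leaf has three cubic neighbours. Apply Lemma~\ref{lem:fourteen-critical-set} to the central edge $uv$, which has a critical set in $A$ by minimality.
\begin{itemize}[nosep]
\item Size $2$: this gives a cubic $w\in A$ with $N(w)$ equal to the two leaves of $u$ plus $u$'s cubic neighbour $b^*$. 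Then $wb^*$ is a $3$-line.
\item Size $4$: symmetrically, this gives a cubic $z\in B$ adjacent to both leaves of $v$ and to $v$'s cubic neighbour.
\item Size $3$: I would compute $N(X)$ directly. This is the main obstacle, since it is the least constrained case.
\end{itemize}
After obtaining these twin vertices, I would apply the same lemma to a leaf edge $uy$, where $y$ is a leaf of $u$. That edge is also an edge between degree-$4$ vertices, so it produces more forced cubic twins. The five $3$-lines together with these forced adjacencies should pin $G$ down enough to yield one of two contradictions: a set $X$ with $|N(X)|\le|X|+1$, or a perfect matching using only $V_3$--$V_4$ edges and edges of $G[V_4]$, which avoids every $3$-line. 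For the size-$3$ critical sets I would first try showing that $X$ must contain $u$ together with two cubic vertices whose neighbourhoods lie inside $N(u)\cup\{b\}$ for a single extra vertex $b$. That forces near-twins of the same kind and the same argument applies.
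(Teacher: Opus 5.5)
There is a genuine gap, and part of your toolkit cannot work here. Your reduction of (i) to the $3K_2$ structure, with $a_0,b_0$ isolated in $G[V_3]$, $N(a_0)=V_4\cap B$ and $N(b_0)=V_4\cap A$, is fine. The problem is that tool (a) is vacuous when $n=14$. Each colour class has four cubic vertices, but the opposite class has only three vertices of degree $4$. So in any perfect matching some cubic vertex of $A$ must be matched to a cubic vertex of $B$. Every perfect matching therefore contains a $3$-line by pure counting, and Theorem~\ref{thm:transversal} yields no contradiction. In Lemma~\ref{lem:sixteen} both counts were $4$, which is why it worked there. This has three consequences:
\begin{itemize}
\item Your plan for (i), splitting by the shape of $G[V_4]$ and building a $3$-line-free matching by a Hall check, fails in every shape. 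After placing $a_0$, $b_0$ and one edge of $G[V_4]$, only one degree-$4$ vertex remains for $a_1,a_2,a_3$.
\item Your claim that a Hall failure ``forces two of the $a_i$ to have identical $V_4$-neighbourhoods'' is unfounded; Hall fails for counting reasons alone.
\item The ``perfect matching avoiding every $3$-line'' you hope for at the end of (ii) cannot exist either.
\end{itemize}

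Everything therefore rests on tool (b), which is what the paper uses, but you do not carry it out.

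\emph{Part (i).} The paper takes any edge $uv$ of $G[V_4]$ and notes that $ub_0,va_0\in E(G)$. The twin vertex from Lemma~\ref{lem:fourteen-critical-set}\ref{item:fourteen-critical-set-two} (resp.\ \ref{item:fourteen-critical-set-ends}) would then be a cubic neighbour of the isolated vertex $b_0$ (resp.\ $a_0$), a contradiction. A size-$3$ critical set $\{u,x,y\}$ needs a real case analysis according to how many of $x,y$ are cubic. Your sketch contains only ``the structure above should rule it out'' and ``direct counting''.

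\emph{Part (ii).} You start from the central edge $uv$. There the size-$2$ and size-$4$ cases only produce twin vertices, not contradictions. You leave size $3$ open and end with ``should pin $G$ down enough''. The missing idea is to use the two leaf edges $a_1v$ and $a_2v$ at one centre. Write $b_1,b_2$ for the leaves at $u$, $p$ for your $b^*$, $q$ for the cubic neighbour of $v$, and $r_1$ for the cubic non-neighbour of $a_1$. A critical set for $a_1v$ consists of $a_1$ together with cubic vertices other than $q$.
\begin{itemize}
\item Size $3$ is impossible: each $b_j$ is adjacent to three of the four cubic vertices of $A$, which gives $|N(X)|\ge6$.
\item Size $2$ forces a twin $w$ with $N(w)=N(a_1)\setminus\{v\}$. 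Tracking $p$ and $r_1$ then leads to a violation of Lemma~\ref{lem:brace}\ref{item:brace-neighbourhood}.
\item Size $4$ forces $N(p)=\{u,a_2,q\}$.
\end{itemize}
The same argument for $a_2v$ forces $N(p)=\{u,a_1,q\}$. Both cannot hold, so one of $G-a_1v$ and $G-a_2v$ is a brace, contradicting minimality.
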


\begin{proof}
By Lemma~\ref{lem:eight-reduction}\ref{item:reduction-fourteen},
each colour class of $G$ consists of four cubic vertices and  three vertices of degree $4$, $m=24$, and $t+c=6$. Then $V(G)=V_3\cup V_4$. By Lemma \ref{lem:forest}, $G[V_4]$ is a forest.
So $|E(G[V_4])|=6-c$.

(i) Suppose that $|E(G[V_3])|=|E(G[V_4])|=3$. Then $t=c=3$.
If distinct vertices $t_1,t_2\in V_3\cap A$ are isolated in $G[V_3]$,
then $N_G(\{t_1,t_2\})\subseteq V_4\cap B$.
Since $d_G(t_1)=d_G(t_2)=3$ and $|V_4\cap B|=3$, we have $N_G(t_1)=N_G(t_2)=V_4\cap B$.
Thus $|N_G(\{t_1,t_2\})|=3$, whereas
Lemma~\ref{lem:brace}\ref{item:brace-neighbourhood}
requires $|N_G(\{t_1,t_2\})|\ge4$ since $G$ is a brace, a contradiction.
The same argument holds for $V_3\cap B$.
Thus, the three edges of $G[V_3]$ have three distinct ends
in each colour class and hence form a matching in $G$.
Let $q\in V_3\cap A$ and $p\in V_3\cap B$ be isolated
vertices of $G[V_3]$. Then $N_G(p)=V_4\cap A$ and $N_G(q)=V_4\cap B$.
Since $|E(G[V_4])|=3$, we may assume that $uv\in E(G[V_4])$ with $u\in A$ and $v\in B$. Then $qv,pu\in E(G)$.

We will show that  there is   no critical set contained in $A$ for $uv$ in $G$. Then,
by Lemma~\ref{lem:critical-set}, $G-uv$ is a brace, contradicting the fact that $G$ is a minimal brace.

Suppose, to the contrary, that $X\subseteq A$ is a critical set for $uv$ in $G$.
By definition, we have $u\in X$, $N_G(v)\cap X=\{u\}$ and $|N_G(X)|=|X|+2$.
By Lemma~\ref{lem:fourteen-critical-set}\ref{item:fourteen-critical-set-one}, we have
$|X|\in\{2,3,4\}$.

If $|X|=2$, Lemma~\ref{lem:fourteen-critical-set}\ref{item:fourteen-critical-set-two} implies that there exists a cubic vertex $w\in A$ with
$N_G(w)=N_G(u)\setminus\{v\}$.
Since $p\in N_G(u)\setminus\{v\}$, we obtain
$wp\in E(G[V_3])$, contradicting the fact that $p$ is an isolated vertex of $G[V_3]$.
If $|X|=4$, Lemma~\ref{lem:fourteen-critical-set}\ref{item:fourteen-critical-set-ends}  implies that there exists a cubic vertex $z\in B$ with $N_G(z)=N_G(v)\setminus\{u\}$.
Since $q\in N_G(v)\setminus\{u\}$,
we have $zq\in E(G[V_3])$, contradicting the fact that $q$ is an isolated vertex of $G[V_3]$.

If $|X|=3$, then $|N_G(X)|=5$.
Let $X=\{u,x,y\}$. Then $xv,yv\notin E(G)$ as $N_G(v)\cap X=\{u\}$.
Since $qv\in E(G)$, we have $q\notin X$.
This implies that, in $G$, every cubic vertex in $X$ has exactly one cubic neighbour
and exactly two  neighbours of degree $4$ in $B$.
We next consider three cases depending on whether $x$ and $y$ belong to $V_3$.

If $x,y\in V_3$, since $G[V_3]$ is a matching with $p$ isolated,
their cubic neighbours in $G$ are distinct and different from $p$.
Recall that  $xv,yv\notin E(G)$. Then both $x$ and $y$ have noncubic neighbourhood
$(V_4\cap B)\setminus\{v\}$ in $G$.
Together with $p,v\in N_G(u)$, we have $|N_G(X)|\ge6$, contradicting the fact that $|N_G(X)|=5$.

If exactly one of $x$ and $y$ belongs to $V_4$, say $x$, then  $y\in V_3$.
Since $d_G(x)=4$, $v\notin N_G(x)$, and $|N_G(X)|=5$,
we have $N_G(x)=N_G(X)\setminus\{v\}$.
Since $qv\in E(G)$ and $yv\notin E(G)$, we have $y\ne q$.
Since the three edges of $G[V_3]$ form a matching and $q$ is the
only isolated vertex in $V_3\cap A$, $y$ has exactly one neighbour in $V_3\cap B$ and exactly two neighbours in $(V_4\cap B)\setminus\{v\}$.
The cubic neighbour of $y$ differs from $p$ since $p$ is isolated in
$G[V_3]$. Since $p\in N_G(u)$ and
$N_G(x)=N_G(X)\setminus\{v\}$, $N_G(x)$ contains the two vertices of $(V_4\cap B)\setminus\{v\}$, the vertex $p$, and the cubic neighbour of $y$.
Hence $d_{G[V_4]}(x)=2$ and $|N_G(x)\cap V_3|=2$.
As $|E(G[V_4])|=3$ and $uv\in E(G[V_4])$,
we have $d_{G[V_4]}(u)=1$.
Combining this with $N_G(x)=N_G(X)\setminus\{v\}$, we obtain $N_G(u)\setminus\{v\}\subseteq N_G(x)\cap V_3$. However, the left-hand set has size three and the right-hand
set has size two, a contradiction.

Finally, suppose that $x,y\in V_4$.
Then $V_4\cap A=\{u,x,y\}$ and
$N_G(x)=N_G(y)=N_G(X)\setminus\{v\}$ because $|N_G(X)|=5$.
Consequently, $d_{G[V_4]}(x)=d_{G[V_4]}(y)$ and
$d_{G[V_4]}(u)+2d_{G[V_4]}(x)=3$.
Since $d_{G[V_4]}(u)\ge1$, we have
$d_{G[V_4]}(x)\in\{0,1\}$.
If $d_{G[V_4]}(x)=0$, then
$d_{G[V_4]}(u)=3$ and $N_G(x)=V_3\cap B$.
Thus, $N_G(u)\setminus\{v\}$ contains two vertices in $V_4$. However,  $N_G(u)\setminus\{v\}\subseteq N_G(x)=V_3\cap B$ , a contradiction.
If $d_{G[V_4]}(x)=1$, then $d_{G[V_4]}(u)=1$.
Since $N_G(u)\setminus\{v\}\subseteq N_G(x)=N_G(y)$,
the vertices $u,x,y$ have the same three neighbours in $V_3\cap B$.
Then at least two of these neighbours differ from $p$, and
each of them is adjacent to $u,x,y$ and also has a neighbour
in $G[V_3]$, contradicting its degree three in $G$.

(ii)
Suppose that $G[V_4]$ is a double star. Then $c=1$ and $|E(G[V_4])|=6-c=5$.
Since $|V_4\cap A|=|V_4\cap B|=3$, we may label its
vertices so that $V_4\cap A=\{u,a_1,a_2\}$,
$V_4\cap B=\{v,b_1,b_2\}$, and $E(G[V_4])=\{uv,a_1v,a_2v,ub_1,ub_2\}$.
Let $p\in V_3\cap B$ and $q\in V_3\cap A$ be the unique
cubic neighbours of $u$ and $v$ in $G$, respectively.
Then $N_G(u)=\{v,b_1,b_2,p\}$ and
$N_G(v)=\{u,a_1,a_2,q\}$.
For $i=1,2$, let $r_i$ be the unique vertex of $V_3\cap B$
not adjacent to $a_i$. Thus $a_i$ is adjacent to $v$
and all vertices of $(V_3\cap B)\setminus\{r_i\}$.

The vertex $p$ is adjacent to $u$ and to precisely those
$a_i$ with $r_i\ne p$ in $V_4$. Every other vertex of $V_3\cap B$
has neighbours in $V_4$ only among $a_1,a_2$.
This implies that
$d_{G[V_3]}(p)=|\{i\in\{1,2\}:r_i=p\}|$ and
$d_{G[V_3]}(r)=1+|\{i\in\{1,2\}:r_i=r\}|$
for every $r\in(V_3\cap B)\setminus\{p\}$.

We will show that there is no critical set contained in $A$ for at least one of $a_1v$ and $a_2v$ in $G$.
Then, by Lemma~\ref{lem:critical-set}, at least one of $G-a_1v$ and $G-a_2v$ is a brace, contradicting the fact that $G$ is a minimal brace.

Suppose, to the contrary, that $X\subseteq A$ is a critical set for $a_1v$ in $G$.
By definition, we have $a_1\in X$, $N_G(v)\cap X=\{a_1\}$ and
$|N_G(X)|=|X|+2$. In particular,
$X\setminus\{a_1\}\subseteq(V_3\cap A)\setminus\{q\}$.
By Lemma~\ref{lem:fourteen-critical-set}\ref{item:fourteen-critical-set-one}, we have
$|X|\in\{2,3,4\}$.

If $|X|=3$, then each of $b_1$ and $b_2$ is adjacent to at least one
of the two vertices in $X\setminus\{a_1\}$ since each
has three neighbours in  $V_3\cap A$.
Thus $N_G(X)$ contains $v,b_1,b_2$ and the three neighbours
of $a_1$ in $V_3\cap B$.
Hence $|N_G(X)|\ge6>|X|+2$, a contradiction.

If $|X|=2$, then $|N_G(X)|=4$.
Write $X=\{a_1,w\}$. Then $N_G(X)=N_G(a_1)$,  $w\in (V_3\cap A)\setminus\{q\}$ and $wv\notin E(G)$.
So $N_G(w)\subseteq N_G(a_1)\setminus\{v\}$.
Combining this with $d_G(w)=3$, we have
$N_G(w)=N_G(a_1)\setminus\{v\}=(V_3\cap B)\setminus\{r_1\}$.
Thus $w$ is adjacent to neither $b_1$ nor $b_2$,
and each of  $b_1$ and $b_2$ is adjacent to all three vertices
of $(V_3\cap A)\setminus\{w\}$ in $G$.
Since $q\ne w$, we have $N_G(q)=\{v,b_1,b_2\}$.
Write $(V_3\cap A)\setminus\{q,w\}=\{z_1,z_2\}$.
Each $z_j$ is adjacent to $b_1,b_2$ but not to $v$,
so $d_{G[V_3]}(z_j)=1$.
If $z_jp\in E(G)$, then $N_G(z_j)=\{b_1,b_2,p\}$
and $N_G(\{u,q,z_j\})=\{v,b_1,b_2,p\}$,
contradicting Lemma~\ref{lem:brace}\ref{item:brace-neighbourhood} since $G$ is a brace.
So $z_jp\notin E(G)$.
Since $N_G(q)=\{v,b_1,b_2\}$, we conclude that
$N_G(p)\cap V_3\subseteq\{w\}$ and $d_{G[V_3]}(p)\le1$.
If $d_{G[V_3]}(p)=0$, then $p\notin N_G(w)$.
Since $N_G(w)=(V_3\cap B)\setminus\{r_1\}$, we have $r_1=p$.
The formula for $d_{G[V_3]}(p)$ then gives
$d_{G[V_3]}(p)\ge1$, a contradiction.
If $d_{G[V_3]}(p)=1$, then $pw\in E(G)$ and $r_1\ne p$.
The formula for $d_{G[V_3]}(r_1)$ gives
$d_{G[V_3]}(r_1)\ge2$.
Since $N_G(w)=(V_3\cap B)\setminus\{r_1\}$,
we have $r_1w\notin E(G)$.
Combining this with $r_1q\notin E(G)$, we derive that both
$z_1$ and $z_2$ must be adjacent to $r_1$ in $G$.
Hence $N_G(z_1)=N_G(z_2)=\{b_1,b_2,r_1\}$,
contradicting Lemma~\ref{lem:brace}\ref{item:brace-neighbourhood} since $G$ is a brace.

Now we consider $|X|=4$.
By Lemma~\ref{lem:fourteen-critical-set}\ref{item:fourteen-critical-set-ends},  there exists a cubic vertex in $B$ with neighbourhood $N_G(v)\setminus\{a_1\}=\{u,a_2,q\}$.
This vertex must be $p$ which is the unique neighbour of $u$ in $V_3$.
So $N_G(p)=\{u,a_2,q\}$.
Interchanging $a_1$ and $a_2$, we can conclude that  the existence
of a critical set for $a_2v$ in $G$ forces $N_G(p)=\{u,a_1,q\}$.
Since $a_1\ne a_2$, these equalities cannot both hold. The proof is complete.
\end{proof}

We now complete the exclusion of order fourteen using Lemma~\ref{lem:induction}.
Recall that $\mathcal G$ is the set that consists of $K_2,C_4$, all the biwheels, all the
prisms and  M\"obius ladders.
The graphs in the latter two families are all cubic.

\begin{lemma}\label{lem:fourteen}
$n\ne 14$.
\end{lemma}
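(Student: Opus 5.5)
We argue by contradiction and assume $n=14$. By Lemma~\ref{lem:eight-reduction}\ref{item:reduction-fourteen}, each colour class of $G$ consists of four cubic vertices and three vertices of degree $4$, $m=24$, and $t+c=6$. By Lemma~\ref{lem:fourteen-critical-set}, $t=|E(G[V_3])|=|E(G[V_4])|=6-c$. Theorem~\ref{thm:transversal} gives $t\ge3$, so $c\le3$. Since $G[V_4]$ is a forest on six vertices with $6-c$ edges, we also have $c\ge1$. We first rule out $G\in\mathcal G$. Prisms and M\"obius ladders are cubic, while $G$ has noncubic vertices. The biwheel $B_{14}$ has twelve cubic vertices, not eight. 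Hence $G\notin\mathcal G$, and Lemma~\ref{lem:induction} supplies a minimality-preserving pair $(e,F)$ of index $k\in\{1,2\}$. Let $J=\widehat{G-e}-F$. Then $J$ is a minimal brace of order $14-2k\in\{10,12\}$ with $|E(J)|=23-2k-|F|$ and $|F|\le k+1$.

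Next we split according to $k$ and use the small-order classification in Lemma~\ref{lem:small}. If $k=1$, then $|V(J)|=12$ and $|E(J)|=21-|F|\in\{19,20,21\}$. Lemma~\ref{lem:small}\ref{item:small-twelve} forces either $|E(J)|=20$, in which case $J\in\{B_{12},Q_{12}\}$ and $|F|=1$, or $|E(J)|=19$, in which case $|F|=2$. In the second case Lemma~\ref{lem:induction}\ref{item:induction-one-1} shows that $G$ is a stable-extension of $J$. Then the degrees of $G$ are those of $J$, with four attachment vertices raised by one and two new cubic vertices $x,y$ added. If $k=2$, then $|V(J)|=10$ and $|E(J)|=19-|F|\ge16$. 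Lemma~\ref{lem:small}\ref{item:small-ten} forces $|F|=3$ and $J\in\{B_{10},Q_{10}^{+}\}$.

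In each surviving case we reconstruct $G$ from $J$ by reversing the operations: add $F$, un-bicontract the $k$ inner vertices, and restore $e$. We then track how the degrees change. Lemma~\ref{lem:induction}\ref{item:induction-ends} says that the ends of edges of $F$ are cubic in $G$ exactly when they are outer vertices. Lemma~\ref{lem:induction}\ref{item:induction-one} and~\ref{item:induction-two} pin down where $F$ lies. The aim is to show that either the degree sequence is not four cubic and three quartic vertices per class, or $G[V_4]$ becomes a double star, or $|E(G[V_3])|=|E(G[V_4])|=3$. The last two outcomes are excluded by Lemma~\ref{lem:fourteen-not}. It also helps to compare degrees directly: $J$ has $n_3(J)\in\{8,10\}$ and known noncubic vertices (the hubs of the biwheels, or $p_j,q_j$ in $Q_{12}$ and $Q_{10}^+$). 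Undoing the reduction changes only a bounded number of degrees, so eight cubic vertices with all noncubic vertices of degree exactly $4$ severely restricts where $e$ and $F$ can attach. For instance, the biwheel hubs have degree $5$ in $B_{10}$ and $B_{12}$. If a hub keeps its degree or gains an edge, then $\rho>0$, which contradicts $\rho=0$. So the hub must be an end of $e$ or of the expanded edges.

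The main obstacle is the case analysis for $k=2$ and for $k=1$ with $J\in\{B_{12},Q_{12}\}$, because there are several choices for the positions of the inner and outer vertices. I would first use $\rho=0$ together with $c\le 3$ to force the large-degree vertices of $J$ to be involved. Then I would use the symmetry of $J$ to reduce to a few configurations. In each configuration I would check directly that $G[V_4]$ is a double star (when $c=1$), or that it has three components with $t=3$, and then invoke Lemma~\ref{lem:fourteen-not}. Any remaining configuration would be killed by exhibiting an edge with no critical set, via Lemma~\ref{lem:critical-set}, which contradicts minimality.
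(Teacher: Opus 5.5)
\textbf{Verdict: there is a genuine gap.} Your setup is correct and matches the paper's. You show $G\notin\mathcal G$, Lemma~\ref{lem:induction} produces $J=\widehat{G-e}-F$, and the size bounds of Lemma~\ref{lem:small} leave exactly three cases:
\begin{itemize}
\item $k=1$, $|F|=1$, with $J\in\{B_{12},Q_{12}\}$;
\item $k=2$, $|F|=3$, with $J\in\{B_{10},Q_{10}^{+}\}$;
\item $k=1$, $|F|=2$, the stable-extension case.
\end{itemize}
Everything after that, which is the whole substance of the lemma, is only announced.

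\textbf{The missing structural step.} You never carry out the step that makes each case finite. This uses Lemma~\ref{lem:induction}\ref{item:induction-one}, \ref{item:induction-two}, \ref{item:induction-ends} together with simplicity of $\widehat{G-e}$.
\begin{itemize}
\item For $k=1$, $|F|=1$: one gets $F=\{x_{11}r\}$ with $d_G(x_{11})=3$ and $d_G(r)=4$. Hence $\Delta(J)\le4$, so $J=Q_{12}$, and $x'$ is a cubic vertex of $J$ not adjacent to $r$.
\item For $k=2$: one gets $F=\{x_{11}y_{21},x_{11}r,y_{21}s\}$ and $d_G(x_{12})=d_G(y_{22})=4$. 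Also $x',y',r,s$ are cubic in $J$, with $x'y',x'r,y's\notin E(J)$.
\end{itemize}
Only with these facts does the symmetry of $J$ cut each case down to one or two explicit graphs. Your degree heuristic does not replace this. The hubs of $B_{10}$ have degree $4$, not $5$, so $\rho=0$ excludes only $B_{12}$.

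\textbf{The finishing tool is insufficient.} You rely on Lemma~\ref{lem:fourteen-not}, which covers only $t=3$ and a spanning double star. Most configurations that actually arise are neither.
\begin{itemize}
\item For $J=B_{10}$ (with $x'=x_1$, $y'=y_2$, $r=y_3$, $s=x_4$), the reconstructed $G$ has $G[V_4]$ equal to the path $y_{22}\,a\,y_3\,x_4\,b\,x_{12}$. Here $c=1$, but it is not a double star.
\item For $J=Q_{12}$ (with $x'=x_1$, $r=y_2$, $u=x_j$) and $j\in\{3,4\}$, $G[V_4]$ has two components and $t=4$.
\item In both $4$-edge configurations of the stable-extension case, $G[V_4]$ also has two components and $t=4$.
\end{itemize}
These cases need explicit critical-set arguments on well-chosen edges. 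The paper uses $x_jq_1$, $x_4b$, the pair $pb_1,pb_2$, and $a_1q$. The last one comes only after first using a critical set for $pq$ in the minimal brace $J$ itself.

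The case $J=Q_{10}^{+}$ is not excluded by Lemma~\ref{lem:fourteen-not} either. It fails because the $4$-cycle $p_1q_1x_3y_3$ survives in $G[V_4]$, contradicting Lemma~\ref{lem:forest}. Your catch-all, ``any remaining configuration would be killed by exhibiting an edge with no critical set,'' is exactly the part that needs proof. You give no indication which edges work or why.
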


\begin{proof}
Suppose, to the contrary, that $n=14$.
By Lemma~\ref{lem:eight-reduction}\ref{item:reduction-fourteen},
each colour class of $G$ consists of four cubic vertices and  three vertices of degree $4$, $m=24$, and $t+c=6$. Then $c\ge1$.

Since $\Delta(G)=4$, $G$ is neither a prism nor a M\"obius
ladder, both of which are cubic, and is not $B_{14}$,
which has vertices of degree $6$.
Thus $G\notin \mathcal G$.
By Lemma~\ref{lem:induction}, there is a minimal brace $J=\widehat{G-e}-F$
with index $k\in\{1,2\}$ satisfying $|V(J)|=14-2k$, $|E(J)|=23-2k-|F|$, and $|F|\le k+1$,
where $e$ is a strictly thin edge of $G$.
It follows that $\widehat{G-e}$ is a simple brace.
Since $J$ is a brace,
Lemma~\ref{lem:brace-3-connected} implies that $\delta(J)\ge3$.

If $k=1$, then $|V(J)|=12$, $|E(J)|=21-|F|$, and $|F|\le2$.
By Lemma~\ref{lem:small}\ref{item:small-twelve}, we have $|E(J)|\le20$ and so $|F|\ge 1$.
Thus, we have $1\le |F|\le2$.
If $k=2$, then $|V(J)|=10$, $|E(J)|=19-|F|$, and $|F|\le3$.
By Lemma~\ref{lem:small}\ref{item:small-ten}, we have $|E(J)|\le16$ and so $|F|\ge 3$.
Thus, we have $|F|=3$.
Therefore, we  consider only the following three cases.
In each case, we can obtain a contradiction, which proves that $n\ne14$. 

\medskip
\textbf{Case 1:} $k=1$ and $|F|=1$.
Then $|V(J)|=12$ and $|E(J)|=20$.
By Lemma~\ref{lem:small}\ref{item:small-twelve},
$J$ is  isomorphic to either $B_{12}$ or $Q_{12}$.
\smallskip

Since $k=1$,  $G-e$ has exactly one vertex of degree $2$, which is an end of $e$ since $\delta(G)\ge 3$.
Write $e=uv$, where $d_G(v)=3$ and $d_G(u)=4$, and let
$x_{11},x_{12}$ be the other two neighbours of $v$.
Thus $v$ is the inner vertex and $x_{11},x_{12}$ are the
outer vertices  in $G-e$.

By Lemma~\ref{lem:induction}\ref{item:induction-one},
the unique edge of $F$ is incident  with an outer
vertex in $G-e$, say $x_{11}$. Write $F=\{x_{11}r\}$.
Since $F$ contains only edges surviving bicontraction,  we have $r\ne v$.
Also, $r$ lies in the colour class opposite to $x_{11},x_{12}$,
so $r$ is not an outer vertex in $G-e$.
By Lemma~\ref{lem:induction}\ref{item:induction-ends},
an end of $x_{11}r$ is cubic in $G$ if and only if it is an outer vertex in $G-e$.
So $d_G(x_{11})=3$ and $d_G(r)=4$ since $\Delta(G)=4$.
Let $x'$ be the vertex resulting from bicontraction of $v$ in $G-e$.
Since $\widehat{G-e}$ is simple,
we have $N_G(x_{11})\cap N_G(x_{12})=\{v\}$.
In $\widehat{G-e}$, the edge $x_{11}r$ becomes $x'r$, and they are identified as the same edge.
By the definition of $J$, we have
$d_J(x')=d_G(x_{11})+d_G(x_{12})-3=d_G(x_{12})$ and $d_J(u)=d_J(r)=3$.
All other vertices retain their degrees, so $\Delta(J)\le4$.
Since the hubs of $B_{12}$ have degree $5$, $J$ is isomorphic to $Q_{12}$.

Label $J$ so that
$A_J=\{p_1,p_2,x_1,x_2,x_3,x_4\}$ and
$B_J=\{q_1,q_2,y_1,y_2,y_3,y_4\}$,
with edges $p_\ell y_i$, $x_iq_\ell$, and $x_iy_i$
for $\ell\in\{1,2\}$ and $1\le i\le4$.
Choose the colour classes so that $x'\in A_J$. Then $u\in A_J$ and $r\in B_J$.
Since $\widehat{G-e}$ is simple and $F=\{x_{11}r\}$, we have $x'r\notin E(J)$.
Since every vertex of degree $4$ in $A_J$ is adjacent to every
cubic vertex of $B_J$ in $J$ and $d_J(r)=3$, we have $d_J(x')=3$.
So $d_G(x_{12})=d_J(x')=3$.

Permuting the four pairs $x_i,y_i$, we may assume that
$x'=x_1$ and $r=y_2$, since $r$ is a cubic nonneighbour of $x'$ in $J$.
Since $u\notin\{v,x_{11},x_{12}\}$, it remains distinct
from $x'$ after bicontraction.
Since $u\in A_J$ and $d_J(u)=3$,  we have $u=x_j$ for some $j\in\{2,3,4\}$.
Thus $e=x_jv$ and $F=\{x_{11}y_2\}$.
To reconstruct $G$, add $x_1y_2$ to $J$, split $x_1$
into $x_{11},x_{12}$, and restore $v$ adjacent to
$x_{11},x_{12},x_j$.
Then $x_{11},x_{12}\in A_G$ and $v\in B_G$.
Since $d_G(x_{11})=d_G(x_{12})=3$, each of them has exactly two
neighbours in $\{q_1,q_2,y_1,y_2\}$.
Since $F=\{x_{11}y_2\}$, the vertex $y_2$ is adjacent
to $x_{11}$ in $G$. Up to interchanging $q_1,q_2$, there are
only two cases, as follows.

If both $q_1$ and $q_2$ are adjacent to the same split vertex in $G$, then
$N_G(x_{11})=\{y_1,y_2,v\}$ and
$N_G(x_{12})=\{q_1,q_2,v\}$.
Take $i=1$. Then $q_1x_{1i}\notin E(G)$ and
$N_G(x_{1i})\ne\{q_2,y_j,v\}$.
Figure~\ref{fig:case1} illustrates this case with $j=2$.
If $q_1$ and $q_2$  are adjacent to different split vertices in $G$,
label $q_1$ and $q_2$ such that  $q_1x_{11}\in E(G)$.
Then $N_G(x_{11})=\{q_1,y_2,v\}$ and
$N_G(x_{12})=\{q_2,y_1,v\}$.
Take $i=2$. Again $q_1x_{1i}\notin E(G)$ and
$N_G(x_{1i})\ne\{q_2,y_j,v\}$ since
$j\in\{2,3,4\}$.
Thus, in either case, we can choose $q_1$ and
$i\in\{1,2\}$ so that $q_1x_{1i}\notin E(G)$ and
$N_G(x_{1i})\ne\{q_2,y_j,v\}$.

\begin{figure}[htbp]
\centering
\includegraphics[width=0.6\textwidth]{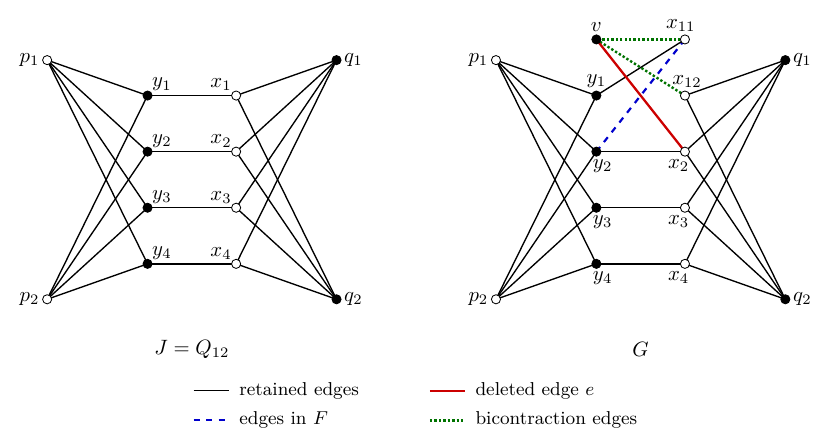}
\caption{Case 1 with $j=2$ of Lemma~\ref{lem:fourteen}:
both $q_1$ and $q_2$ are adjacent to $x_{12}$.}
\label{fig:case1}
\end{figure}

Note that $x_jq_1\in E(G)$ since $j\ne1$.
Suppose that  there exists a critical set $X\subseteq A_G$ for $x_jq_1$ in $G$.
By definition, we have $x_j\in X$, $N_G(q_1)\cap X=\{x_j\}$ and $|N_G(X)|=|X|+2$.
By our choice of $q_1$ and $i$, the vertices in $A_G$
not adjacent to $q_1$ are precisely $p_1,p_2,x_{1i}$.
Thus $x_j\in X\subseteq\{x_j,p_1,p_2,x_{1i}\}$ and so  $|X|\le4$.
Note that $N_G(x_j)=\{q_1,q_2,y_j,v\}$ and $N_G(p_1)=N_G(p_2)=\{y_1,y_2,y_3,y_4\}$.

If $X=\{x_j\}$, then $|N_G(X)|=4>|X|+2$, a contradiction.
If $X$ contains $p_1$ or $p_2$, then $\{q_1,q_2,v,y_1,y_2,y_3,y_4\}\subseteq N_G(X)$.
Thus $|N_G(X)|=7>|X|+2$ since $|X|\le4$, a contradiction.

It remains to consider $X=\{x_j,x_{1i}\}$.  Note that
$N_G(x_j)\setminus\{q_1\}=\{q_2,y_j,v\}$.
By our choice of $q_1$ and $i$,
$q_1\notin N_G(x_{1i})$ and
$N_G(x_{1i})\ne\{q_2,y_j,v\}$.
Since both $N_G(x_{1i})$ and $\{q_2,y_j,v\}$ have size three,
their union has at least four vertices.
Together with $q_1\in N_G(x_j)$, this gives
$|N_G(X)|\ge5>|X|+2$, a contradiction.
Hence, there is no critical set contained in $A_G$ for $x_jq_1$ in $G$.
By Lemma~\ref{lem:critical-set}, $G-x_jq_1$ is a brace,
contradicting the minimality of $G$.

\medskip
\noindent\textbf{Case 2:} $k=2$ and  $|F|=3$.
Then $|V(J)|=10$ and $|E(J)|=16$.
By Lemma~\ref{lem:small}\ref{item:small-ten},
$J$ is isomorphic to $B_{10}$ or $Q_{10}^{+}$.
\smallskip

Since $k=2$, $G-e$ has exactly two vertices of degree $2$, which are ends of $e$ since $\delta(G)\ge 3$.
Write $e=uv$, where $d_G(u)=d_G(v)=3$, and let
$x_{11},x_{12}$ be the other two neighbours of $u$ and
$y_{21},y_{22}$ the other two neighbours of $v$.
Thus, $u,v$ are the inner vertices, and
$x_{11},x_{12},y_{21},y_{22}$ are the outer vertices in $G-e$.
Moreover, since $G$ is bipartite, we have $x_{11}x_{12},y_{21}y_{22}\notin E(G)$, and $N_G(u)\cap N_G(v)=\emptyset$. Therefore, $x_{11}$, $x_{12}$, $y_{21}$ and $y_{22}$ are four distinct vertices.

Since $\widehat{G-e}$ is simple, we have
$N_G(x_{11})\cap N_G(x_{12})=\{u\}$ and
$N_G(y_{21})\cap N_G(y_{22})=\{v\}$.
By Lemma~\ref{lem:induction}\ref{item:induction-two},
there are two adjacent outer vertices, say $x_{11}$ and $y_{21}$,
such that $F\subseteq
\partial_G(x_{11})\cup\partial_G(y_{21})$.
If $F\cap \partial_G(x_{11})\ne \emptyset$, then by Lemma \ref{lem:induction}\ref{item:induction-ends},  we have
$d_G(x_{11})=3$ and $ux_{11}\notin F$ since $d_G(u)=3$ and $u$ is an inner vertex.
Since $|F|=3$, we have $F\cap \partial_G(y_{21})\ne \emptyset$.
By symmetry, if $F\cap \partial_G(y_{21})\ne \emptyset$, then $F\cap \partial_G(x_{11})\ne \emptyset$. It follows that $F\cap \partial_G(x_{11})\ne \emptyset$ and $F\cap \partial_G(y_{21})\ne \emptyset$.
Again by Lemma~\ref{lem:induction}\ref{item:induction-ends},  we have
$d_G(x_{11})=d_G(y_{21})=3$.
Since $|F|=3$, we have $F=\big(\partial_G(x_{11})\cup\partial_G(y_{21})\big)\setminus\{ux_{11},vy_{21}\}$.
Let $N_G(x_{11})=\{u,y_{21},r\}$ and
$N_G(y_{21})=\{v,x_{11},s\}$.
Then $F=\{x_{11}y_{21},x_{11}r,y_{21}s\}$.
Since $\widehat{G-e}$ is simple,  we have
$E_G(\{x_{11},x_{12}\},\{y_{21},y_{22}\})=\{x_{11}y_{21}\}$,
which implies that neither $r$ nor $s$ is an outer vertex.
By Lemma~\ref{lem:induction}\ref{item:induction-ends},
we have $d_G(r)=d_G(s)=4$ since $\Delta(G)=4$.

Let $x'$ and $y'$ be the vertices obtained by contracting
$\{u,x_{11},x_{12}\}$ and $\{v,y_{21},y_{22}\}$,
respectively. The edges $x_{11}y_{21},x_{11}r,y_{21}s$ become
$x'y',x'r,y's$ in $\widehat{G-e}$.
By the definition of $J$, we have $d_J(x')=d_G(x_{11})+d_G(x_{12})-4=d_G(x_{12})-1$ and
$d_J(y')=d_G(y_{21})+d_G(y_{22})-4=d_G(y_{22})-1$.
Recall  that $\delta(J)\ge3$.
This, together with $\Delta(G)=4$, implies that
$d_G(x_{12})=d_G(y_{22})=4$ and $d_J(x')=d_J(y')=3$.
Since each of $r$ and $s$ loses exactly one edge of $F$, we have $d_J(r)=d_J(s)=3$.
The three edges at each of $x_{11}$ and $y_{21}$ are either
bicontracted or deleted. Consequently, every edge of $J$ incident
with $x'$ or $y'$ is represented in $G$ by an edge incident with
$x_{12}$ or $y_{22}$, respectively.

Since $E_G(\{x_{11},x_{12}\},\{y_{21},y_{22}\})=\{x_{11}y_{21}\}$ and $F=\{x_{11}y_{21},x_{11}r,y_{21}s\}$, we have $x'y'\notin E(J)$.
Recall that $\widehat{G-e}$ is simple and $x_{11}r, y_{21}s\in E(G)$. Then $x_{12}r,y_{22}s\notin E(G)$, and so $x'r,y's\notin E(J)$.
Note that $x'$ and $s$ lie in the same colour class of $J$, and $y'$ and $r$ lie in the other colour class.

\smallskip
\textit{Subcase 2.1:} $J$ is isomorphic to $B_{10}$.
\smallskip
Use the hubs $a,b$ and rim vertices
$x_1,\ldots,x_4,y_1,\ldots,y_4$ of
Figure~\ref{fig:eight-cubic}.
The rim edges are $x_iy_i,x_iy_{i-1}$, with indices
modulo four, and the hub edges are $ay_i,bx_i$, $1\le i\le 4$.
By symmetry, we may assume $x'=x_1$.
The cubic nonneighbours of $x_1$ in the opposite colour class are $y_2,y_3$.
Since a symmetry of $B_{10}$ fixes $x_1$ and interchanges
$y_2,y_3$, we may assume that $y'=y_2$ and $r=y_3$.
The cubic nonneighbours of $y_2$ in the opposite colour class of $J$
are exactly $x_1,x_4$. Since $y_2s\notin E(J)$ and $s\ne x'$,
we have $s=x_4$.
Thus $x'=x_1$, $y'=y_2$, $e=uv$, and
$F=\{x_{11}y_{21},x_{11}y_3,x_4y_{21}\}$.
We illustrate the two graphs $J$ and $G$ in Figure \ref{fig:case2-b10}.

\begin{figure}[htbp]
\centering
\includegraphics[width=0.6\textwidth]{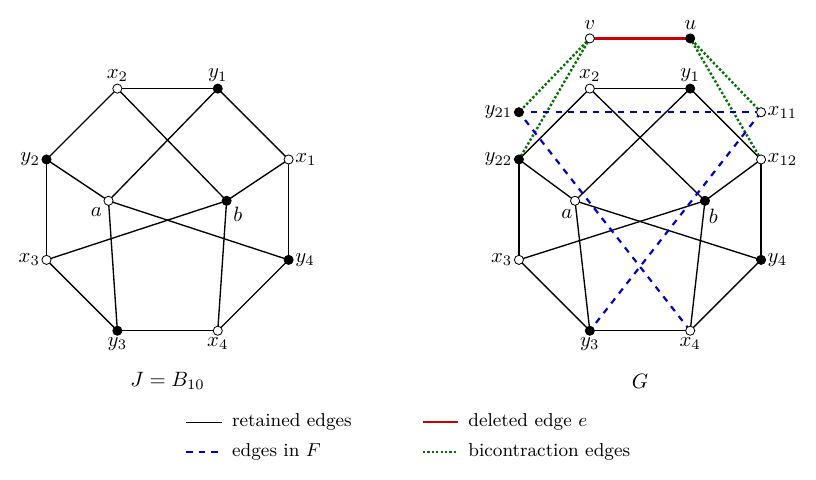}
\caption{Subcase 2.1 with $J=B_{10}$.}
\label{fig:case2-b10}
\end{figure}

Note that $x_4b\in E(G)$.
Suppose that  there exists a critical set $X\subseteq A_G$ for $x_4b$ in $G$.
By definition, we have $x_4\in X$, $N_G(b)\cap X=\{x_4\}$ and $|N_G(X)|=|X|+2$.
Since $N_G(b)=\{x_{12},x_2,x_3,x_4\}$, we have
$x_4\in X\subseteq\{x_4,a,v,x_{11}\}$ and so $|X|\le4$.
Note that
$N_G(x_4)=\{b,y_3,y_4,y_{21}\}$,
$N_G(a)=\{y_1,y_{22},y_3,y_4\}$,
$N_G(v)=\{u,y_{21},y_{22}\}$, and
$N_G(x_{11})=\{u,y_{21},y_3\}$.

If $X=\{x_4\}$, then $|N_G(X)|=4>|X|+2$,
a contradiction.
If $X=\{x_4,x_{11}\}$, then
$N_G(X)=\{b,y_3,y_4,y_{21},u\}$,
so $|N_G(X)|=5>|X|+2$, a contradiction.
If $a\in X$, then
$\{b,y_3,y_4,y_{21},y_1,y_{22}\}\subseteq N_G(X)$.
For $|X|\le3$, this gives $|N_G(X)|\ge6>|X|+2$,
a contradiction.
If $|X|=4$, then $X=\{x_4,a,v,x_{11}\}$.
Since $u\in N_G(v)$, we obtain
$|N_G(X)|=7>|X|+2$, again a contradiction.
The remaining sets are $\{x_4,v\}$ and
$\{x_4,v,x_{11}\}$.
Both sets have neighbourhood
$\{b,y_3,y_4,y_{21},u,y_{22}\}$ in $G$.
So $|N_G(X)|=6>|X|+2$, a contradiction.
Hence, there is no critical set contained in $A_G$ for $x_4b$ in $G$.
By Lemma~\ref{lem:critical-set}, $G-x_4b$ is a brace,
contradicting the minimality of $G$.

\smallskip
\textit{Subcase 2.2:} $J$ is isomorphic to $Q_{10}^{+}$.
\smallskip
Use the labelling of $Q_{10}^{+}$ in
Figure~\ref{fig:eight-cubic}, choosing the colour classes
so that $x'$ lies in the class containing $p_1,p_2,x_1,x_2,x_3$.
Recall that $d_J(x')=d_J(y')=d_J(r)=d_J(s)=3$, $x'y',x'r,y's\notin E(J)$,
$x'$ and $s$ lie in the same colour class of $J$, and $y'$ and $r$ lie in the other colour class.
Since $d_J(p_1)=4$, we have $x'\ne p_1$.
The vertex $p_2$ has only one cubic nonneighbour $q_2$ in the opposite
colour class, whereas $y',r$ are two distinct cubic nonneighbours
of $x'$. Thus $x'\ne p_2$.
Permuting the three pairs $x_i,y_i$, we may assume
$x'=x_1$, $y'=y_2$, and $r=y_3$.
The cubic nonneighbours of $y_2$ in the opposite colour class of $J$
are exactly $x_1,x_3$. Since $y's\notin E(J)$ and $s\ne x'$,
we have $s=x_3$.
Thus $x'=x_1$, $y'=y_2$, $e=uv$, and
$F=\{x_{11}y_{21},x_{11}y_3,x_3y_{21}\}$.
We illustrate the two graphs $J$ and $G$ in Figure \ref{fig:case2-q10}.
The vertices $p_1,q_1$ retain degree four in $G$, while $x_3,y_3$
each gain one edge of $F$ and also have degree four in $G$.
All four edges of the cycle $p_1q_1x_3y_3p_1$ in $J$ are retained
in $G$. Thus this cycle lies in $G[V_4]$, contradicting Lemma~\ref{lem:forest}.

\begin{figure}[htbp]
\centering
\includegraphics[width=0.6\textwidth]{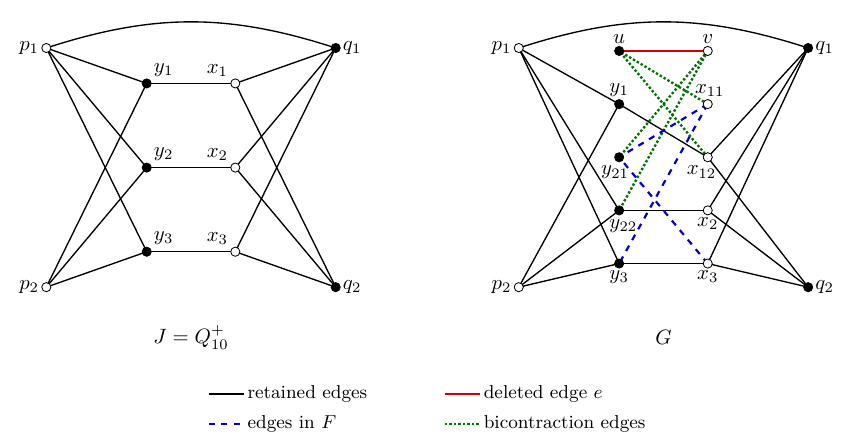}
\caption{Subcase 2.2 with $J=Q_{10}^{+}$.}
\label{fig:case2-q10}
\end{figure}

\medskip
\textbf{Case 3:} $k=1$ and $|F|=2$.
Then $|V(J)|=12$ and $|E(J)|=19$.
By  Lemma~\ref{lem:induction}\ref{item:induction-one-1},
$G$ is isomorphic to a stable-extension of $J$.
\smallskip

Let the attachment vertices be $a_1,a_2,b_1,b_2$, forming a stable
set in $J$, with $a_1,a_2\in A_J$ and $b_1,b_2\in B_J$.
The graph $G$ is obtained from $J$ by adding the two new vertices $x,y$ and
the edges $xb_1,xb_2,xy,ya_1,ya_2$, where
$x\in A_G$ and $y\in B_G$.
Then $\{a_1,a_2,b_1,b_2\}$ is a stable set in $G$.

Each colour class of $J$ has six vertices and degree sum $19$.
Since $\delta(J)\ge3$, its total degree excess above three is
$19-6\times3=1$. Thus $J$ has exactly one vertex of degree $4$  in
each colour class, say $p\in A_J$ and $q\in B_J$, and all other
vertices are cubic.
Neither $p$ nor $q$ is an attachment vertex, since each attachment
vertex gains one edge in $G$ and $\Delta(G)=4$.
Consequently, $V_4(G)=\{p,q,a_1,a_2,b_1,b_2\}$.

Note that $E(G[V_4])\subseteq\{pq,pb_1,pb_2,qa_1,qa_2\}$.
Lemma~\ref{lem:fourteen-critical-set} implies that $|E(G[V_4])|=|E(G[V_3])|$, and
Theorem~\ref{thm:transversal} implies that $|E(G[V_3])|\ge 3$.
By Lemma~\ref{lem:forest}, $G[V_4]$ is a forest and so $|E(G[V_4])|\le 5$.
Combining these, we can obtain $3\le|E(G[V_4])|=|E(G[V_3])|\le5$.
By Lemma~\ref{lem:fourteen-not}\ref{item:fourteen-not-three}, $|E(G[V_4])|=|E(G[V_3])|=3$ is impossible.
If $|E(G[V_4])|=5$, then $G[V_4]$ is a double star,
contradicting Lemma~\ref{lem:fourteen-not}\ref{item:fourteen-not-double}.
Hence $|E(G[V_4])|=4$.
By symmetry, we only need to consider the following two subcases:
$E(G[V_4])=\{pb_1,pb_2,qa_1,qa_2\}$ or $E(G[V_4])=\{pq,pb_1,pb_2,qa_1\}$.
For both subcases, write
$A_J=\{p,a_1,a_2,u_1,u_2,u_3\}$ and
$B_J=\{q,b_1,b_2,v_1,v_2,v_3\}$.
The vertices $u_1,u_2,u_3,v_1,v_2,v_3$ are cubic
in both $J$ and $G$.

\smallskip
\textit{Subcase 3.1: $E(G[V_4])=\{pb_1,pb_2,qa_1,qa_2\}$.}
Since $d_G(p)=4$ and $N_G(p)\setminus\{b_1,b_2\}\subseteq\{v_1,v_2,v_3\}$, we may assume that  $N_G(p)=\{b_1,b_2,v_1,v_2\}$.
For $i=1,2$, the vertex $b_i$ is adjacent  to
$p,x$ and exactly two of $u_1,u_2,u_3$ in $G$.
The vertices missed by $b_1,b_2$ must be distinct;
otherwise $|N_J(\{b_1,b_2\})|=3$, contradicting
Lemma~\ref{lem:brace}\ref{item:brace-neighbourhood}.
Relabelling $u_1,u_2,u_3$,  we may  assume that
$N_G(b_1)=\{p,x,u_2,u_3\}$ and
$N_G(b_2)=\{p,x,u_1,u_3\}$.
Recall that  each $a_i$ is adjacent to $q$ and exactly two
of $v_1,v_2,v_3$ in $J$. Then $|N_J(\{a_1,a_2\})|\le 4$.
By Lemma~\ref{lem:brace}\ref{item:brace-neighbourhood}, we have $|N_J(\{a_1,a_2\})|\ge 4$.
Consequently,
$N_J(\{a_1,a_2\})=\{q,v_1,v_2,v_3\}$ and
$N_G(\{a_1,a_2\})=\{q,y,v_1,v_2,v_3\}$.

Since $G$ is a minimal brace, $G-pb_i$ is not a brace, $i=1,2$.
By Lemma~\ref{lem:critical-set},  there exists a critical set $X\subseteq A_G$ for $pb_i$ in $G$.
By definition, we have $p\in X$, $N_G(b_i)\cap X=\{p\}$ and $|N_G(X)|=|X|+2$.
Since $N_G(b_i)=\{p,x,u_{3-i},u_3\}$, we have
$p\in X\subseteq\{p,a_1,a_2,u_i\}$ and so  $|X|\le4$.

If $X=\{p\}$, then
$|N_G(X)|=4>|X|+2$, a contradiction.
If $X\cap\{a_1,a_2\}\ne\emptyset$, then
$\{b_1,b_2,v_1,v_2,q,y\}\subseteq N_G(X)$.
For $|X|\le3$, this gives
$|N_G(X)|\ge6>|X|+2$, a contradiction.
If $|X|=4$, then $X=\{p,a_1,a_2,u_i\}$.
Since $N_G(\{a_1,a_2\})=\{q,y,v_1,v_2,v_3\}$, we obtain
$N_G(X)=\{b_1,b_2,q,y,v_1,v_2,v_3\}$.
Thus $|N_G(X)|=7>|X|+2$, again a contradiction.
Therefore $X=\{p,u_i\}$.
Since $|N_G(X)|=4=|N_G(p)|$, we have
$N_G(u_i)\subseteq N_G(p)$.
Since $b_i\notin N_G(u_i)$ and  $d_G(u_i)=3$, we have
$N_G(u_i)=N_G(p)\setminus\{b_i\}=\{b_{3-i},v_1,v_2\}$.
It follows that $N_G(u_1)=\{b_2,v_1,v_2\}$ and $N_G(u_2)=\{b_1,v_1,v_2\}$.
Combining these with $N_G(p)=\{b_1,b_2,v_1,v_2\}$, we obtain
$N_G(\{p,u_1,u_2\})=\{b_1,b_2,v_1,v_2\}$, contradicting Lemma~\ref{lem:brace}\ref{item:brace-neighbourhood} since $G$ is a brace.

\smallskip
\textit{Subcase 3.2: $E(G[V_4])=\{pq,pb_1,pb_2,qa_1\}$.}
\smallskip
Since $d_J(a_2)=3$ and $N_J(a_2)\cap \{b_1,b_2,q\}=\emptyset$, we have $N_J(a_2)=\{v_1,v_2,v_3\}$.
Since $d_J(p)=4$, we may relabel $v_1,v_2,v_3$ so that
$N_J(p)=\{q,b_1,b_2,v_3\}$.

Since  $J$ is a minimal brace,  $J-pq$ is not a brace. By Lemma~\ref{lem:critical-set}, there is a critical set
$X\subseteq A_J$ for $pq$ in $J$.
It follows that  $1\le|X|\le |A_J|-2=4$,  $p\in X$, $N_J(q)\cap X=\{p\}$, and $|N_J(X)|=|X|+2$.
Since $d_J(p)=4$, we have $|X|\ne1$.
Moreover, $X$ excludes the three vertices of
$N_J(q)\setminus\{p\}$, and so $|X|\le3$.
Thus $|X|\in\{2,3\}$.
If $|X|=3$, Lemma~\ref{lem:critical-set} implies that there is a  dual critical set of $X$ with  size $|A_J|-|X|-1=2$ in $J$. Hence, there exists a critical set with size two in $A_J$ or $B_J$ for $pq$ in $J$.

For a critical set $\{p,w\}\subseteq A_J$ for $pq$ in $J$, we have
$|N_J(\{p,w\})|=4=|N_J(p)|$ and $q\notin N_J(w)$.
Therefore $N_J(w)\subseteq N_J(p)\setminus\{q\}$.
Since $\delta(J)\ge3$, we have $N_J(w)=N_J(p)\setminus\{q\}$.
Likewise, a critical set $\{q,w'\}\subseteq B_J$ for $pq$ in $J$
would give $N_J(w')=N_J(q)\setminus\{p\}$.
If $w'\in\{v_1,v_2,v_3\}$, then $a_2\in N_J(w')$
but $a_2\notin N_J(q)$, a contradiction. If  $w'\in\{b_1,b_2\}$, then $a_1\notin N_J(w')$
but $a_1\in N_J(q)\setminus\{p\}$, a contradiction.
Thus, there is no critical set of size $2$ contained in $B_J$ for $pq$ in $J$.
It follows that there exists a critical set $\{p,w\}\subseteq A_J$ for $pq$ in $J$.
Moreover, $N_J(w)=N_J(p)\setminus\{q\}=\{b_1,b_2,v_3\}$.
Since neither $a_1$ nor $a_2$ is adjacent to $b_1,b_2$ in $J$,
we have $w\in\{u_1,u_2,u_3\}$.
Relabelling these vertices, assume that $w=u_3$.
Then $N_J(u_3)=\{b_1,b_2,v_3\}$.

Note that $d_J(q)=4$ and $N_J(q)\setminus\{a_1,p\}\subseteq\{u_1,u_2,u_3\}$.
Since $qu_3\notin E(J)$, we have
$N_J(q)=\{p,a_1,u_1,u_2\}$.
Note that $N_J(v_3)=\{p,a_2,u_3\}$ as $v_3$ is cubic in $J$.
Then $a_1v_3\notin E(J)$.
Since $qa_1\in E(J)$, $d_J(a_1)=3$, and
$N_J(a_1)\setminus\{q\}\subseteq\{v_1,v_2\}$, we obtain
$N_J(a_1)=\{q,v_1,v_2\}$.

In $G$, we have $N_G(a_1)=\{q,y,v_1,v_2\}$,
$N_G(a_2)=\{y,v_1,v_2,v_3\}$, $N_G(x)=\{y,b_1,b_2\}$,
$N_G(u_3)=\{b_1,b_2,v_3\}$, and $N_G(q)=\{p,a_1,u_1,u_2\}$.
Since $G$ is a minimal brace, $G-a_1q$ is not a brace. By Lemma~\ref{lem:critical-set},  there exists a critical set $X\subseteq A_G$ for $a_1q$ satisfying
$a_1\in X\subseteq\{a_1,a_2,x,u_3\}$
and $|N_G(X)|=|X|+2$ in $G$.
We next show that the latter equality never holds, a contradiction.

If $X=\{a_1\}$, then $|N_G(X)|=4>|X|+2$.
Note that each of $a_2$, $x$,  and $u_3$ has a neighbour outside $N_G(a_1)$.
If $|X|=2$, then $|N_G(X)|\ge5>|X|+2$.
If $|X|\ge3$, then $X$ contains at least two of $a_2$, $x$,  and $u_3$.
It follows that $N_G(X)=\{q,y,b_1,b_2,v_1,v_2,v_3\}=B_G$.
Thus $|N_G(X)|=7>|X|+2$ because $|X|\le4$.
In every case the required equality fails.
\end{proof}

\begin{proof}[Proof of Theorem~\ref{thm:main}\ref{item:eight}]

Suppose that $G$ is isomorphic to one of $B_8, B_{10}, Q_{10}^{+}$ and $Q_{12}$.
Then $G$ is a minimal brace by Lemma~\ref{lem:small},
and has exactly eight cubic vertices by Proposition~\ref{prop:parameters}.

Conversely, suppose that $G$ is a minimal brace of order $n\ge 6$ which has exactly eight cubic vertices.
By Lemma~\ref{lem:eight-reduction}, either $G$ is isomorphic to one of $B_8, B_{10}, Q_{10}^{+}$ and $Q_{12}$, or $n\in\{14,16\}$.
By Lemmas~\ref{lem:sixteen} and \ref{lem:fourteen}, we have $n\ne14$ and $n\ne16$, which implies that $G$ is isomorphic to one of $B_8, B_{10}, Q_{10}^{+}$ and $Q_{12}$.
This completes the proof of Theorem \ref{thm:main}\ref{item:eight}.
\end{proof}

\begin{proposition}\label{prop:four-lines}
Let $G$ be a minimal brace of order at least $6$ with at most eight
cubic vertices. Then $G$ has at least four $3$-lines, with equality
if and only if $G$ is isomorphic to $ Q_{12}$.
\end{proposition}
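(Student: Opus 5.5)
By Theorem~\ref{thm:main}\ref{item:count}, a minimal brace $G$ of order at least $6$ with at most eight cubic vertices satisfies $n_3(G)\in\{6,8\}$. The two cases are handled separately, and the count of $3$-lines is read off from what we already know about each.

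If $n_3(G)=6$, then Theorem~\ref{thm:main}\ref{item:count} gives $G\cong K_{3,3}$. This graph is cubic, so all nine of its edges are $3$-lines and $t=9>4$.

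If $n_3(G)=8$, then Theorem~\ref{thm:main}\ref{item:eight} gives $G\in\{B_8,B_{10},Q_{10}^{+},Q_{12}\}$. The values of $t$ for these four graphs are listed in Proposition~\ref{prop:parameters}: they are $12$, $8$, $9$ and $4$ respectively. Hence $t\ge4$ in every case, with $t=4$ only for $Q_{12}$.

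It remains to check that $Q_{12}$ has exactly four $3$-lines, which verifies that table entry. In $Q_{12}$ (so $\ell=4$), the vertices $p_1,p_2,q_1,q_2$ have degree $\ell=4$, and each $x_i,y_i$ has degree $3$. The edges $p_jy_i$ and $x_iq_j$ each have one end of degree $4$, so the only $3$-lines are the four edges $x_iy_i$. This gives exactly four, and it establishes both the inequality and the characterisation of equality.

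The argument is a direct consequence of the classification in Theorem~\ref{thm:main}, so no step should be hard. The only point needing care is the table entries for $B_{10}$ and $Q_{10}^{+}$, which I will confirm by direct counting.

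In $B_{10}$, the rim $C_8$ is all cubic and the hubs have degree $4$, so the $3$-lines are exactly the $8$ rim edges.

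In $Q_{10}^{+}$, the vertices $p_2,q_2,x_i,y_i$ have degree $3$, while $p_1,q_1$ have degree $4$. The $3$-lines are therefore $x_iy_i$, $p_2y_i$ and $x_iq_2$ for $i=1,2,3$, which gives $9$.
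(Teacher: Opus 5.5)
Your proposal is correct and follows the paper's proof. Like the paper, you invoke the classification in Theorem~\ref{thm:main} to reduce to $K_{3,3}$, $B_8$, $B_{10}$, $Q_{10}^{+}$ and $Q_{12}$, and then read off the number of $3$-lines from Proposition~\ref{prop:parameters}; your direct recounts for $Q_{12}$, $B_{10}$ and $Q_{10}^{+}$ are accurate and simply make explicit what the paper leaves to the table.
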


\begin{proof}
By Theorem~\ref{thm:main}, $G$ is isomorphic to one of $K_{3,3}, B_8, B_{10}, Q_{10}^{+}$ and $Q_{12}$.
Note that $K_{3,3}$ has nine $3$-lines. By Proposition~\ref{prop:parameters}, each of $B_8, B_{10}, Q_{10}^{+}$ and $Q_{12}$ has at least four $3$-lines, with equality if and only if $G$ is isomorphic to $ Q_{12}$. The result holds.
\end{proof}

\section{Concluding remarks}\label{sec:conclusion}
By Theorem~\ref{thm:transversal}, for any minimal brace $G$
of order at least $6$,   every perfect
matching of $G$ contains a $3$-line and hence $G$ has at least three $3$-lines.
Proposition~\ref{prop:four-lines} shows that the lower bound of
three $3$-lines is not attained by any graph with at most eight
cubic vertices. Then the following problem is of interest.

\begin{question}\label{question:four-lines}
Does every minimal brace of order at least six contain at least four
$3$-lines?
\end{question}

An affirmative answer would be best possible, since $Q_{12}$ has
exactly four $3$-lines by Proposition~\ref{prop:parameters}.
A counterexample would have exactly three $3$-lines by
Theorem~\ref{thm:transversal}, at least nine cubic vertices by
Proposition~\ref{prop:four-lines}, and its $3$-lines would form
either a star or a matching by Lemma~\ref{lem:three-shapes}.

\section*{Acknowledgements}
This work is supported by the National Natural Science Foundation of China (Nos. 12501480, 12571381 and 12371361) and Natural Science Foundation of Henan Province (No. 252300421786).

\end{document}